\documentclass[12pt,a4paper,final]{amsart}
\usepackage[cp1250]{inputenc}
\usepackage[T1]{fontenc}

\usepackage{amsfonts}
\usepackage{amsmath}
\usepackage{amssymb}
\usepackage{amsthm}
\usepackage{mathrsfs}
\usepackage{amsopn}
\usepackage{indentfirst}
\usepackage{enumerate}
\usepackage{graphicx}
\usepackage{xcolor}

\newtheoremstyle{df}{8pt}{}{}{0cm}{\bf}{.\ }{0pt}{}
\numberwithin{equation}{section}
\newtheorem{tw}{Theorem}[section]
\newtheorem{lm}[tw]{Lemma}
\newtheorem{stw}[tw]{Proposition}
\newtheorem{wn}[tw]{Corollary}
\newtheorem{uw}[tw]{Remark}
\theoremstyle{df}
\newtheorem{df}[tw]{Definition}
\newtheorem{ex}[tw]{Example}

\newcommand{\R}{\mathbb{R}}

\newcommand{\X}{\mathcal{X}}
\newcommand{\XM}{\X(M)}

\newcommand{\Span}{\operatorname{span}}

\newcommand{\eps}{\varepsilon}

\newcommand{\Cniesk}{{C^\infty}}

\newcommand{\ind}{\operatorname{ind}}
\newcommand{\Rad}{\operatorname{Rad}}

\begin{document}
\baselineskip=17pt
\title{Contact variation of almost contact pseudo-metric structures}
\author[A. Bor\'owka]{Aleksandra Bor\'owka}
\address{Aleksandra Bor\'owka, Institute of Mathematics, Jagiellonian University in Krak\'ow, Poland}
\email{aleksandra.borowka@uj.edu.pl}
\author[Z. Szancer]{Zuzanna Szancer}\address{Zuzanna Szancer, Department of Applied Mathematics, University of Agriculture in Krakow, 253 Balicka St., 30-198 Krakow, Poland}
\email{Zuzanna.Szancer@urk.edu.pl}

\keywords{almost contact pseudo-metric manifold, almost pseudo co-K\"ahler manifold, almost contact pseudo-metric statistical manifold, \(\mathcal{D}\)-homothetic transformation}
\subjclass[2020]{Primary 53C15; Secondary 53C50, 53C25, 53B12}

\begin{abstract}
For an almost contact pseudo-metric manifold we study a variation of the metric in the spirit of Meli-Ngakeu-Olea \cite{MNO}. Such variation is defined using a vector field on the manifold and the procedure shifts the signature of the metric by increasing the positive index by two. First we derive conditions on the vector field for the obtained manifold to remain almost contact pseudo-metric. Then we study the cases of almost pseudo co-K\"ahler  manifolds. In particular we obtain geometric characterizations of variations for which the fundamental form remains closed and establish a local decomposition into a product. We further investigate the variation in the setting of statistical structures.
\end{abstract}

\maketitle

\section{Introduction}

Almost contact metric manifolds are the subject of intensive interest both from a purely mathematical perspective as well as due to their numerous applications to mathematical physics (see for example \cite{ADDK},\cite{KKMI},\cite{PysC}). In the pseudo-Riemannian context, they were first considered by Takahashi \cite{Tak} and subsequently discussed in several works \cite{CP}, \cite{Perrone}. While initial focus was placed mostly 0on studies of the Lorentzian case with a timelike Reeb vector field (\cite{Calvaruso}, \cite{D}), recent developments in certain areas of physics, such as relativistic magneto-fluids or multi-time string theories, motivate also the study of the pseudo-Riemannian case when the Reeb vector field is spacelike and the timelike directions are contact. In this paper, we study a canonical variation of almost contact pseudo-metric structures in the spirit of the work \cite{MNO}. Using a vector field on the manifold, we deform the metric in a way that changes its signature by increasing the positive index by \(2\). In the context of Lorentzian contact manifolds, analogous construction has been studied in its simplest form where the Reeb vector field is timelike (see \cite{Calvaruso}, where this case is called \(\mathcal{D}\)-homothetic transformation). Therefore, in the present work, we mostly consider the remaining case where the Reeb vector field is spacelike. We first determine the necessary and sufficient conditions on the vector field for the target pseudo-metric to remain almost contact. Next, we investigate the particularly interesting case of almost co-K\"ahler metrics, deriving analogous conditions. Finally, we study the case of almost pseudo-metric statistical structures.
\section{Preliminaries}\label{sec::Preliminaries}
Let us firstly introduce the main objects of this paper.
\begin{df} \begin{enumerate}
    \item 

A $(2n+1)$-dimensional manifold $M$ is equipped with an
\emph{almost contact structure} if there exist on $M$ a tensor
field $\varphi$ of type (1,1), a vector field $\xi$ and a 1-form
$\eta$ which satisfy
$$
\varphi^2(X)=-X+\eta(X)\xi, \ \ \ 
\eta(\xi)=1
$$
for every $X\in TM$.

\item If additionally there is a pseudo-metric
$g$ on $M$ such that
$$
g(\varphi X,\varphi Y)=g(X,Y)-\eps\eta(X)\eta(Y),\quad \eps \in \{-1,1\}
$$
for every $X,Y\in TM$ then $(\varphi,\xi,\eta,g)$ is called an
\emph{almost contact pseudo-metric structure}. 
\item An almost contact pseudo-metric
structure is called  \emph{Sasakian} if
$$
(\widehat{\nabla}_X\varphi)(Y)=g(X,Y)\xi-\eps\eta(Y)X,
$$
where $\widehat{\nabla}$ is the Levi-Civita connection for $g$. 
\item An
almost contact pseudo-metric structure $(\varphi,\xi,\eta,g)$ is called a
\emph{contact pseudo-metric structure} if

$$g(X,\varphi Y)=d\eta(X,Y)$$

for every $X,Y\in TM$.
\item
We say that an almost contact structure $(\varphi,\xi,\eta)$ is
\emph{normal} if
$$
[\varphi,\varphi]+2d\eta\otimes\xi=0,
$$
where $[\varphi,\varphi]$ is the Nijenhuis tensor for $\varphi$.
\end{enumerate}
\end{df}
\begin{df}
Let $(M,\varphi,\xi,\eta,g)$ be an almost contact pseudo-metric manifold. $M$ is called almost co-K\"ahler pseudo-metric manifold if
$$
d\eta=0,\qquad d \Phi=0,
$$
where $\Phi$ is the fundamental form, that is $\Phi (X,Y)=g(X,\varphi Y)$.
\end{df}

\begin{df}
$M$ is called  co-K\"ahler pseudo-metric manifold if it is almost co-K\"ahler and normal.
\end{df}

The following property is well known:
\begin{uw}\label{uw::cok::char}
Let $(M,\varphi,\xi,\eta,g,\eps)$ be an almost contact pseudo-metric manifold
and let $\widehat{\nabla}$ be the Levi-Civita connection of $g$. Then
$(M,\varphi,\xi,\eta,g,\eps)$ is co-K\"ahler if and only if
$$
\widehat{\nabla}\varphi=0.
$$
\end{uw}
\section{Almost contact variation of an almost contact pseudo-metric manifold}
Our first aim is to define \emph{contact variation of a metric along a vector field} in a similar way it has been done for Hermitian case in \cite{MNO}.
Let $(M,\varphi,\xi,\eta,g,\varepsilon)$ be an almost contact pseudo-metric manifold and let $F_1,F_2\in\XM$. We define a symmetric tensor of type $(0,2)$ by the formula:
\begin{equation}\label{eq::Gast}
g^\ast(X,Y)=g(X,Y)+g(X,F_1)g(Y,F_1)+g(X,F_2)g(Y,F_2).
\end{equation}
\begin{lm}\label{lm::gast::general}

The metric $g^\ast$ defined above is non-degenerate if and only if
\begin{equation}\label{eq::GastNonDeg}
(1+g(F_1,F_1))(1+g(F_2,F_2))-g(F_1,F_2)^2\neq 0.
\end{equation}

If $g^\ast$ is non-degenerate then $(M,\varphi,\xi,\eta,g^\ast,\varepsilon^\ast)$ is
an almost contact pseudo-metric manifold if and only if
\begin{enumerate}
  \item $F_1=b_1\xi, F_2=b_2\xi$, $b_1^2+b_2^2=2$, $\varepsilon = -1$, $\varepsilon^\ast=1$ or
  \item $F_1, F_2\in\ker\eta$, $F_2=\pm \varphi F_1$, $\varepsilon = \varepsilon^\ast$.
\end{enumerate}
\end{lm}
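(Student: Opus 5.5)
The plan has three parts: prove the non-degeneracy criterion, reduce the compatibility condition to an identity between two quadratic forms of rank at most two, and then split into the cases $\eps^\ast=\eps$ and $\eps^\ast=-\eps$.

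\emph{Non-degeneracy.} I would look for the radical of $g^\ast$. If $g^\ast(X,\cdot)=0$, then non-degeneracy of $g$ gives $X=-c_1F_1-c_2F_2$ with $c_i=g(X,F_i)$. Pairing with $F_j$ shows that $c=(c_1,c_2)$ satisfies $(I+G)c=0$, where $G$ is the Gram matrix $\big(g(F_i,F_j)\big)$. If $c=0$ then $X=0$. Conversely, let $c\neq0$ lie in the kernel of $I+G$ and put $X=-c_1F_1-c_2F_2$. Then $X$ is a nonzero radical vector: $g(X,F_i)=c_i$, and these are not all zero. Hence $g^\ast$ is degenerate exactly when $\det(I+G)=0$, and this determinant is the expression in \eqref{eq::GastNonDeg}.

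\emph{Reduction.} From $g(\varphi X,\varphi Y)=g(X,Y)-\eps\eta(X)\eta(Y)$ one gets $g(\xi,\xi)=\eps$, $\eta=\eps g(\xi,\cdot)$, and skew-symmetry $g(\varphi X,Y)=-g(X,\varphi Y)$. Put $\alpha_i=g(F_i,\cdot)$ and $\beta_i=g(\varphi F_i,\cdot)$. Substituting \eqref{eq::Gast} into the compatibility condition for $g^\ast$ and cancelling via the one for $g$, the requirement becomes
$$\beta_1\otimes\beta_1+\beta_2\otimes\beta_2-\alpha_1\otimes\alpha_1-\alpha_2\otimes\alpha_2=(\eps-\eps^\ast)\,\eta\otimes\eta .$$
We have $\beta_i(\xi)=-g(F_i,\varphi\xi)=0$. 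Hence evaluating at $X=\xi$ gives $\sum_i\alpha_i(\xi)\alpha_i=-(\eps-\eps^\ast)\eta$. Note also that both sides of the identity are invariant under replacing $(F_1,F_2)$ by $O(F_1,F_2)$ with $O\in O(2)$ acting pointwise, and the same holds for $g^\ast$. This lets me normalise.

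\emph{Case $\eps^\ast=\eps$.} Here $\sum\alpha_i\otimes\alpha_i=\sum\beta_i\otimes\beta_i$. Two sums of two squares of covectors agree iff the pairs differ by a (pointwise) orthogonal matrix, so $\beta=O\alpha$. Since the $\beta_i$ vanish on $\xi$, so do the $\alpha_i$, i.e.\ $F_i\in\ker\eta$. Then $\varphi F_i=\sum_j O_{ij}F_j$. Applying $\varphi$ again and using $\varphi^2=-\mathrm{id}$ on $\ker\eta$ gives $O^2=-I$ on $\Span\{F_1,F_2\}$.

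There are two subcases. If $F_1,F_2$ are independent, then $O$ is orthogonal with $O^2=-I$, hence a rotation by $\pm\pi/2$, which yields $F_2=\pm\varphi F_1$. If they are dependent, the span is $\varphi$-invariant of dimension at most one inside $\ker\eta$; this forces $F_1=F_2=0$, which is a trivial instance of (2). Conversely, if $F_2=\pm\varphi F_1$ with $F_1\in\ker\eta$, then $\varphi F_2=\mp F_1$, so $(\beta_1,\beta_2)=(\pm\alpha_2,\mp\alpha_1)$ and the identity holds.

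\emph{Case $\eps^\ast=-\eps$, the main obstacle.} The right-hand side is now $2\eps\,\eta\otimes\eta$. By the $O(2)$-normalisation I can rotate so that $\alpha_2(\xi)=0$. The evaluation at $\xi$ then says $\alpha_1(\xi)\alpha_1=-2\eps\eta$, so $\alpha_1\neq0$ is a multiple of $\eta$. Therefore $F_1\in\Span\{\xi\}$, and consequently $\beta_1=0$.

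The identity becomes $\beta_2\otimes\beta_2-\alpha_2\otimes\alpha_2=(\alpha_1\otimes\alpha_1+2\eps\eta\otimes\eta)$, whose right-hand side is a multiple of $\eta\otimes\eta$. Evaluating at $(\xi,\xi)$ kills the left side, since $\alpha_2(\xi)=\beta_2(\xi)=0$. This makes the coefficient zero, so $\beta_2=\pm\alpha_2$, i.e.\ $\varphi F_2=\pm F_2$ with $F_2\in\ker\eta$. This forces $F_2=0$.

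Undoing the rotation, both $F_i$ are multiples $b_i\xi$. The vanishing coefficient, $\alpha_1(\xi)^2=-2\eps$ with $\alpha_1(\xi)^2=(b_1^2+b_2^2)\,g(\xi,\xi)^2$, then gives $b_1^2+b_2^2=2$ and forces $\eps=-1$, $\eps^\ast=1$. Careful sign bookkeeping with $g(\xi,\xi)=\eps$ is the delicate point here. The converse is a direct check.
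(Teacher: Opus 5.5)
Your proof is correct, but it is organised differently from the paper's, mainly in the second part.

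\textbf{Non-degeneracy.} You and the paper start from the same observation: a radical vector of $g^\ast$ lies in $\Span\{F_1,F_2\}$. The paper then splits into cases according to whether $F_1,F_2$ are linearly dependent. You instead pass to the coefficients $c_i=g(X,F_i)$ and reduce to $\ker(I+G)$. This removes the case distinction and yields the determinant condition directly.

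\textbf{Compatibility.} The paper writes $F_i=U_i+b_i\xi$ with $U_i\in\ker\eta$, checks the condition directly, and splits on $b_1\neq0$ versus $b_1=0$. That split tacitly requires relabelling $F_1,F_2$: for example, $F_1=0$, $F_2=\sqrt2\,\xi$, $\eps=-1$ has $b_1=0$ but falls under (1), not (2). You instead reduce to the identity $\sum_i\beta_i\otimes\beta_i-\sum_i\alpha_i\otimes\alpha_i=(\eps-\eps^\ast)\eta\otimes\eta$. You then split on $\eps^\ast=\eps$ versus $\eps^\ast=-\eps$, using two facts: the pointwise $O(2)$-invariance, and that two pairs of covectors with equal sums of squares differ by an orthogonal matrix.

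\textbf{What each route gives.} Your route explains condition (2) conceptually: $(\beta_1,\beta_2)$ is an orthogonal image of $(\alpha_1,\alpha_2)$, and $\varphi^2=-\mathrm{id}$ on $\ker\eta$ forces that orthogonal map to be a quarter-turn. It also handles the symmetric roles of $F_1,F_2$ and the degenerate configurations (dependent or vanishing $F_i$) uniformly. The paper's decomposition is more hands-on and quicker to state, but it leaves the actual verification to the reader.

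The one fact you assert without proof, the orthogonal equivalence, is standard: $Av\mapsto Bv$ is a well-defined isometry between the images and extends to an orthogonal map of $\R^2$. So there is no gap.
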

\begin{proof}
Let us define
$$
A(Y):=Y+g(Y,F_1)F_1+g(Y,F_2)F_2.
$$ 
Then $g^\ast$ is degenerate if and only if $\ker A\neq 0$.
If $Y\in\XM$, $Y\neq 0$, $Y\in\ker A$ then
$$
Y=\alpha F_1+\beta F_2
$$
and by analyzing two cases based on whether $F_1$ and $F_2$ are linearly dependent, a straightforward computation yields the claim.

\par In order to prove the second part of the lemma first note that we can decompose
$$
F_1=U_1+b_1\xi,\quad F_2=U_2+b_2\xi
$$
where  $U_1,U_2\in\ker\eta$ and $b_1,b_2\in \Cniesk(M)$. The direct check of the almost contact metric condition implies that if we assume that $b_1\neq 0$ we obtain the first condition, while for $b_1=0$ we obtain the second one.
\end{proof}
\begin{uw}
Taking $$
F_1:=\xi,\quad F_2:=\xi
$$ we obtain a particular change of metric
$$
\tilde{g}(X,Y)=g(X,Y)+g(X,\sqrt{2}\xi)g(Y,\sqrt{2}\xi).
$$
This is the \(\mathcal{D}\)-homothetic transformation studied in \cite{Calvaruso} in the case when $g$ is Lorentzian and $\xi$ timelike as a way of passing from Lorentzian to Riemannian case. Therefore in this paper we will focus on the case when $\xi$ is spacelike and we change the index in the contact direction.

\end{uw}

Let us now study what is the index of $g^\ast$ when $F\in\ker\eta$ and $g(F,F)\neq 0$. We define $$
H:=\operatorname{span} \{F,\varphi F\}
.$$ 
Then $H$
is $2$-dimensional subspace of $TM$ and
$
TM=H\oplus H^{\perp_g}
.$ The following proposition implies in particular that in some cases this allows us to pass from $(n-2,2)$ signature to the Riemannian case.
\begin{stw}
\label{lem::indeks:gstar}
Let $(M,\varphi,\xi,\eta,g)$ be an almost contact pseudo-metric manifold and let $F\in\ker\eta$. On $M$ we define a symmetric tensor of type $(0,2)$ by Formula (\ref{eq::Gast}).
If $g^\ast$ is nondegenerate then
\begin{equation}\label{eq::indeks:gstar}
\ind g^\ast=
\begin{cases}
\ind g-2,& \lambda<-1,\\
\ind g,& \lambda>-1,
\end{cases}
\end{equation}
where $\lambda:=g(F,F)$.
Moreover, $g^\ast$ is degenerate if and only if $\lambda=-1$.
\end{stw}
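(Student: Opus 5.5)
Here $F_1=F$ and $F_2=\varphi F$, which is case (2) of Lemma~\ref{lm::gast::general}. For $X\in\ker\eta$ the compatibility condition gives $g(\varphi X,\varphi Y)=g(X,Y)$. The plan is to compute $g^\ast$ separately on $H=\Span\{F,\varphi F\}$ and on $H^{\perp_g}$, show that the decomposition $TM=H\oplus H^{\perp_g}$ is also $g^\ast$-orthogonal, and then read off the index from the two blocks.

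First I record the basic identities.
\begin{itemize}
\item Since $\eta(F)=0$, we have $g(\varphi F,\varphi F)=g(F,F)=\lambda$.
\item Also $g(F,\varphi F)=\Phi(F,F)=0$, because $\Phi$ is skew-symmetric. Skew-symmetry follows by replacing $Y$ with $\varphi Y$ in the compatibility condition; this works for both values of $\eps$ since $\eta\circ\varphi=0$.
\end{itemize}
So $F$ and $\varphi F$ are $g$-orthogonal, each of length $\lambda\neq 0$. Hence $g|_H$ is nondegenerate and $TM=H\oplus H^{\perp_g}$ holds, with $\ind g=\ind(g|_H)+\ind(g|_{H^{\perp_g}})$. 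Here $\ind(g|_H)$ equals $2$ if $\lambda<0$ and $0$ if $\lambda>0$.

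Next, for $X\in H^{\perp_g}$ both correction terms $g(X,F)$ and $g(X,\varphi F)$ vanish. Therefore $g^\ast(X,Y)=g(X,Y)$ whenever $X\in H^{\perp_g}$, for every $Y\in TM$. This shows that $H$ and $H^{\perp_g}$ are $g^\ast$-orthogonal and that $g^\ast=g$ on $H^{\perp_g}$. On $H$, in the basis $F,\varphi F$, the cross terms vanish by the identities above. We get
\[
g^\ast(F,F)=\lambda+\lambda^2=\lambda(1+\lambda)=g^\ast(\varphi F,\varphi F),\qquad g^\ast(F,\varphi F)=0 .
\]
So $g^\ast|_H=\mathrm{diag}(\lambda(1+\lambda),\lambda(1+\lambda))$.

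The conclusions then follow directly.
\begin{itemize}
\item $g^\ast$ is degenerate exactly when $\lambda(1+\lambda)=0$, i.e.\ $\lambda=-1$, since $\lambda\neq0$. This agrees with \eqref{eq::GastNonDeg}, which here reads $(1+\lambda)^2\neq 0$.
\item The index of $g^\ast$ on $H$ is $2$ if $\lambda(1+\lambda)<0$, i.e.\ $-1<\lambda<0$, and $0$ otherwise.
\end{itemize}
Comparing with $\ind(g|_H)$:
\begin{itemize}
\item for $\lambda<-1$ the index on $H$ drops from $2$ to $0$, giving $\ind g-2$;
\item for $-1<\lambda<0$ it stays $2$;
\item for $\lambda>0$ it stays $0$.
\end{itemize}
This gives \eqref{eq::indeks:gstar}.

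The only delicate points are the standing assumption $\lambda\neq0$, which the paper imposes before the statement, and the identity $g(F,\varphi F)=0$. I expect no real obstacle beyond checking the skew-symmetry of $\Phi$ carefully for both signs of $\eps$.
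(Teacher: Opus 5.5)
\textbf{Verdict: there is a gap in the case $\lambda=0$; for $\lambda\neq 0$ your argument is correct.} In that case it is essentially the paper's own proof: the $g^\ast$-orthogonal splitting $TM=H\oplus H^{\perp_g}$, with $g^\ast|_H=\mathrm{diag}(\lambda(1+\lambda),\lambda(1+\lambda))$ and $g^\ast=g$ on $H^{\perp_g}$.

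The proposition does not assume $\lambda\neq 0$. The value $\lambda=0$ belongs to the branch $\lambda>-1$ of \eqref{eq::indeks:gstar}, and the paper's proof treats it separately. The sentence before the proposition mentions $g(F,F)\neq 0$, but that is not a hypothesis of the statement, so leaning on it proves a strictly weaker result. For $F=0$ the claim is trivial. The problem is a nonzero $g$-null $F\in\ker\eta$, which exists whenever $g|_{\ker\eta}$ is indefinite; for instance $F=\partial_{x_1}+\partial_{x_2}$ in the paper's first example. Then $g(F,F)=g(\varphi F,\varphi F)=g(F,\varphi F)=0$, so $H$ is totally isotropic and $H\subset H^{\perp_g}$. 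The decomposition $TM=H\oplus H^{\perp_g}$, on which your entire index count rests, therefore does not exist. The degeneracy assertion does survive at $\lambda=0$: if $A(Y)=Y+g(Y,F)F+g(Y,\varphi F)\varphi F=0$, then $Y\in H$ and $A(Y)=(1+\lambda)Y$, which matches \eqref{eq::GastNonDeg} reading $(1+\lambda)^2\neq 0$. What remains unproved is the index formula $\ind g^\ast=\ind g$ for null $F\neq 0$.

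There are two ways to close the gap.
\begin{enumerate}
\item \emph{The paper's route.} Since $g|_{\ker\eta}$ is nondegenerate and $F,\varphi F$ are independent, choose $X\in\ker\eta$ with $g(X,F)=1$ and $g(X,\varphi F)=0$. Set $V=\Span\{F,X,\varphi F,\varphi X\}$ and $\mu=g(X,X)$. In the basis $(F,X,\varphi F,\varphi X)$, $g|_V$ is block diagonal with two blocks $\begin{pmatrix}0&1\\1&\mu\end{pmatrix}$, and $g^\ast|_V$ with two blocks $\begin{pmatrix}0&1\\1&\mu+1\end{pmatrix}$. Each block has determinant $-1$, so both forms have index $2$ on $V$. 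For $Y\in V^{\perp_g}$ the correction terms vanish, so $V^{\perp_g}$ is $g^\ast$-orthogonal to $V$ and $g^\ast=g$ there. Hence $\ind g^\ast=\ind g$.
\item \emph{A uniform route for all $\lambda>-1$.} Consider $g_t(Y,Z)=g(Y,Z)+t\,(g(Y,F)g(Z,F)+g(Y,\varphi F)g(Z,\varphi F))$ for $t\in[0,1]$. This is $g^\ast$ built from $\sqrt{t}F\in\ker\eta$. The kernel computation above gives $A_t(Y)=(1+t\lambda)Y$ on $H$, and $1+t\lambda>0$, so every $g_t$ is nondegenerate. The index cannot jump along a continuous path of nondegenerate symmetric forms, so $\ind g^\ast=\ind g_1=\ind g_0=\ind g$. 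Your block computation is then needed only for $\lambda<-1$.
\end{enumerate}
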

\begin{proof}
First note that since $g^\ast$ is nondegenerate, Lemma \ref{lm::gast::general} implies $\lambda=g(F,F)\neq -1$.

First assume $\lambda\neq 0$ then
$$
\ind g^\ast=\ind g^\ast|_H+\ind g^\ast|_{H^{\perp_g}} = \ind g^\ast|_H+\ind g|_{H^{\perp_g}}
$$
since $g^\ast=g$ on $H^{\perp_g}$.
By straightforward computations we obtain
\begin{align*}
g^\ast (F,\varphi F)&=0,\\
g^\ast(F,F)&=g^\ast (\varphi F,\varphi F)=\lambda (\lambda +1).
\end{align*}
Therefore
$$
\ind g^\ast|_H=
\begin{cases}
0,& \lambda\in(-\infty,-1)\cup(0,+\infty),\\
2,& \lambda\in(-1,0)
\end{cases}
$$
and
$$
\ind g|_H=
\begin{cases}
0,& (0,+\infty),\\
2,& \lambda\in(-\infty,0).
\end{cases}
$$
In consequence
$$
\ind g^\ast|_H=
\begin{cases}
\ind g|_H,& \lambda\in (-1,0)\cup(0,+\infty),\\
\ind g|_H-2,& \lambda\in(-\infty,-1)
\end{cases}
$$
what proves (\ref{eq::indeks:gstar}) in case $\lambda\neq 0$.
\par Now assume that $\lambda = 0$.  If $F=0$, then $g^\ast=g$ and $\ind g^\ast=\ind g$ and (\ref{eq::indeks:gstar}) holds trivially.
If $F\neq 0$ then $\dim\ker\eta\geq 4$ and due to nondegeneracy of $g$ there exists $X\in\ker\eta$ such that
$$
g(X,F)=1 \quad\text{and}\quad g(X,\varphi F)=0.
$$
Let
$$
V:= \operatorname{span}\{F,X,\varphi F,\varphi X \}
$$
We have
$$
g|_V\sim
\left[\begin{matrix}
  0 & 1 & 0 & 0 \\
  1 & \mu & 0 & 0 \\
  0 & 0 & 0 & 1 \\
  0 & 0 & 1 & \mu
\end{matrix}\right],\quad
g^\ast|_V\sim
\left[\begin{matrix}
  0 & 1 & 0 & 0 \\
  1 & \mu+1 & 0 & 0 \\
  0 & 0 & 0 & 1 \\
  0 & 0 & 1 & \mu+1
\end{matrix}\right]
$$
where $\mu = g(X,X)$.
Now
$$
\ind g^\ast = \ind g^\ast|_V+\ind g^\ast|_{V^{\perp_g}} = \ind g
$$
since
$$
\ind g^\ast|_V =2=  \ind g|_V
$$
and $g^\ast = g$ on $V^{\perp_g}$ what completes the proof in case $\lambda = 0$.
\end{proof}
A particular case from our perspective is when $g$ is an almost co-K\"ahler pseudo-metric. The next theorem gives necessary and sufficient conditions that $g^{\ast}$ is also pseudo almost co-K\"ahler.
\begin{tw}\label{lm::acokequiv}
Let $(M,\varphi,\xi,\eta,g)$ be an almost co-K\"ahler pseudo-metric manifold and $F\in \ker\eta$. Let $g(F,F)\neq -1$ and $g(F,F)\neq 0$. Then the following conditions are equivalent:
\begin{enumerate}
\item $(M,\varphi,\xi,\eta,g^{\ast})$ \text{is an almost co-K\"ahler pseudo-metric manifold},
\item $d(g(F,\cdot)\wedge g(\varphi F,\cdot))=0$,
\item $H^{\perp_g}$ is an integrable distribution and for all  $X\in H^{\perp_g}$ $$X(g(F,F))=g(X,\nabla_FF+\nabla_{\varphi F}\varphi F).$$ 
\end{enumerate}
\end{tw}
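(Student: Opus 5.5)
The plan is to reduce everything to a computation with the two $1$-forms
$$
\alpha:=g(F,\cdot),\qquad \beta:=g(\varphi F,\cdot).
$$
I take $F_1=F$ and $F_2=\varphi F$, as in case (2) of Lemma \ref{lm::gast::general}. The sign $\pm$ is irrelevant because $F_2$ enters $g^\ast$ quadratically. With $\lambda=g(F,F)\neq -1$, Proposition \ref{lem::indeks:gstar} and Lemma \ref{lm::gast::general} tell us that $(\varphi,\xi,\eta,g^\ast)$ is almost contact pseudo-metric with the same $\eta$. Hence $d\eta=0$ is automatic, and condition (1) is equivalent to $d\Phi^\ast=0$.

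\textbf{Step 1: (1)$\Leftrightarrow$(2).} I compute $\Phi^\ast(X,Y)=g^\ast(X,\varphi Y)$ directly. I use the standard identities $g(\varphi X,Y)=-g(X,\varphi Y)$ and $g(\varphi Y,\varphi F)=g(Y,F)$, the latter valid since $\eta(F)=0$. These give
$$
\Phi^\ast(X,Y)=\Phi(X,Y)-\alpha(X)\beta(Y)+\beta(X)\alpha(Y),
$$
so $\Phi^\ast=\Phi-\alpha\wedge\beta$ up to the normalisation convention for $\wedge$. Since $d\Phi=0$, we get $d\Phi^\ast=0$ if and only if $d(\alpha\wedge\beta)=0$.

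\textbf{Step 2: (2)$\Leftrightarrow$(3).} I write $d(\alpha\wedge\beta)=d\alpha\wedge\beta-\alpha\wedge d\beta$ and evaluate this $3$-form on triples adapted to $TM=H\oplus H^{\perp_g}$, where $H^{\perp_g}=\ker\alpha\cap\ker\beta$. Note that $\xi\in H^{\perp_g}$, and that $\alpha(F)=\beta(\varphi F)=\lambda$ while $\alpha(\varphi F)=\beta(F)=0$. By trilinearity and skew-symmetry only four types of triple need to be checked.

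On $(X,Y,Z)$ with all entries in $H^{\perp_g}$, every term contains $\alpha$ or $\beta$ of a vector in $H^{\perp_g}$, so the form vanishes automatically.

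On $(F,X,Y)$ and $(\varphi F,X,Y)$ with $X,Y\in H^{\perp_g}$, the form equals $-\lambda\, d\beta(X,Y)$ and $\lambda\, d\alpha(X,Y)$ respectively. Since $\lambda\neq0$, vanishing on these triples means exactly that $d\alpha$ and $d\beta$ vanish on $H^{\perp_g}$. By Frobenius (via $d\alpha(X,Y)=-\alpha([X,Y])$ for $X,Y\in\ker\alpha$) this is the integrability of $H^{\perp_g}$.

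On $(F,\varphi F,X)$ with $X\in H^{\perp_g}$, only the terms with $\beta(\varphi F)$ and $\alpha(F)$ survive. The form becomes
$$
\lambda\bigl(d\alpha(X,F)+d\beta(X,\varphi F)\bigr).
$$
Expressing $d$ via the Levi-Civita connection $\nabla$ of $g$ (torsion-free, metric), we have
$$
d\alpha(X,F)=g(\nabla_XF,F)-g(\nabla_FF,X)=\tfrac12X(\lambda)-g(\nabla_FF,X),
$$
and similarly $d\beta(X,\varphi F)=\tfrac12X(\lambda)-g(\nabla_{\varphi F}\varphi F,X)$, using $g(\varphi F,\varphi F)=\lambda$. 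Their sum vanishes exactly when
$$
X(g(F,F))=g(X,\nabla_FF+\nabla_{\varphi F}\varphi F).
$$

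Triples with two entries from $\{F,\varphi F\}$ repeated vanish trivially. This gives (2)$\Leftrightarrow$(3).

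I expect the main obstacle to be bookkeeping rather than any conceptual point. The normalisation constants in $\wedge$ and $d$ and the sign conventions for $\Phi$ have to be handled consistently, so that the factor $\tfrac12$ from $X(\lambda)=2g(\nabla_XF,F)$ combines into exactly $X(g(F,F))$ with coefficient one. I would fix the convention $d\omega(A,B)=A\omega(B)-B\omega(A)-\omega([A,B])$ at the outset and check the final identity against it. The hypothesis $\lambda\neq0$ is used precisely to divide out $\lambda$ in the last two triple types.
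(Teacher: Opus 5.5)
Your proposal is correct. Step 1 is the paper's own argument: $\Phi^\ast=\Phi-\alpha\wedge\beta$, and $d\eta=0$ holds automatically because $\eta$ is unchanged.

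Step 2 differs from the paper in a way that matters. The paper proves only (2)$\Rightarrow$(3). It obtains integrability of $H^{\perp_g}$ from Frobenius and then evaluates $d(\alpha\wedge\beta)$ on the single triple $(X,F,\varphi F)$. It never argues (3)$\Rightarrow$(2), even though that is the direction invoked later in the proof of Theorem~\ref{tw::radical}.

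Your case check over the triple types of $TM=\Span\{F,\varphi F\}\oplus H^{\perp_g}$ supplies that converse. It shows that $d(\alpha\wedge\beta)$ is completely determined by two pieces of data:
\begin{itemize}
\item the restrictions $d\alpha|_{H^{\perp_g}}$ and $d\beta|_{H^{\perp_g}}$, which vanish exactly when $H^{\perp_g}$ is involutive, since $d\alpha(X,Y)=-\alpha([X,Y])$ and $d\beta(X,Y)=-\beta([X,Y])$ for sections of $\ker\alpha\cap\ker\beta$;
\item the single remaining component $\lambda\bigl(d\alpha(X,F)+d\beta(X,\varphi F)\bigr)$.
\end{itemize}
This is what makes the equivalence genuinely two-sided. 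The paper's version is shorter but one-directional; yours costs only a few lines of bookkeeping.

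One point you use implicitly and should state: $\lambda\neq 0$ makes $F$ and $\varphi F$ pointwise independent and $\Span\{F,\varphi F\}$ nondegenerate, since its Gram matrix is $\lambda$ times the identity. This is what guarantees the splitting. As you anticipated, the normalisation conventions for $\wedge$ and $d$ do not affect the final condition, because both terms in the $(F,\varphi F,X)$ component carry the same constant.
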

\begin{proof}
First note that
\begin{align*}
\Phi ^{\ast}(X,Y):=g^{\ast}(X,\varphi Y)=\Phi (X,Y)-(g(F,\cdot)\wedge g(\varphi F,\cdot))(X,Y)
\end{align*}
Since $d\eta =0$ and $d \Phi =0$ we immediately get $d\Phi^{\ast}=0$ if and only if $d(g(F,\cdot)\wedge g(\varphi F,\cdot))=0$. Thus (1) and (2) are equivalent.
\par 
It is easy to see that $H^{\perp_g}=\ker\alpha \cap\ker\beta$, where $\alpha:=g(F,\cdot)$ and $\beta:=g(\varphi F,\cdot).$
Since $d(\alpha\wedge\beta)=0$ from the Frobenius theorem we get that $H^{\perp_g}$ is integrable. Moreover for $X\in H^{\perp_g}$ we have
\begin{align*}
0=d(\alpha\wedge\beta)(X,F,\varphi F)
=g(F,F)d\alpha(X,F)+g(F,F)d\beta(X,\varphi F).
\end{align*}
Moreover, using the fact that $\nabla$ is Levi-Civita connection for $g$ and $g(\varphi F,\varphi F)=g(F,F)$,we obtain
\begin{align*}
d\alpha(X,F)=\frac{1}{2} X(g(F,F))-g(\nabla_FF,X),\\
d\beta(X,\varphi F)=\frac{1}{2} X(g(F,F))-g(\nabla_{\varphi F}\varphi F,X).
\end{align*}
Therefore
\begin{align*}
0=g(F,F)d\alpha(X,F)+g(F,F)d\beta(X,\varphi F)=g(F,F)(X(g(F,F))\\-g(X,\nabla_FF+\nabla_{\varphi F}\varphi F))
\end{align*}
but $g(F,F)\neq 0$ so
$$
X(g(F,F))=g(X,\nabla_FF+\nabla_{\varphi F}\varphi F)
$$
for all  $X\in H^{\perp_g}$.
\end{proof}

\begin{ex}
Let $M=\Bigl\{(x_1,y_1,\dots,x_n,y_n,t)\in \mathbb{R}^{2n+1}:y_1>0\Bigr\}$.
Define on $M$ an almost contact pseudo-metric structure $(\varphi,\xi,\eta,g)$ by
$$
\xi=\partial_t,\qquad \eta=dt,\qquad \varphi \xi=0,
$$
$$
\varphi \partial_{x_k}=\partial_{y_k},\qquad \varphi \partial_{y_k}=-\partial_{x_k},
\qquad k=1,\dots,n,
$$
and
$$
g=-dx_1^2-dy_1^2+\sum_{k=2}^n (dx_k^2+dy_k^2)+dt^2.
$$
Then $(M,\varphi,\xi,\eta,g)$ is a co-K\"ahler pseudo-metric manifold of index $2$.

Consider the vector field $F=\frac{1}{y_1}\partial_{x_1}+\partial_{y_1}$.
Then $F\in \ker\eta$ and
$$
\varphi F=\frac{1}{y_1}\partial_{y_1}-\partial_{x_1}.
$$
Moreover
$$
g(F,F)=g(\varphi F,\varphi F)=-\frac{1}{y_1^2}-1<-1.
$$
Hence, the metric
$$
g^\ast(X,Y)=g(X,Y)+g(F,X)g(F,Y)+g(\varphi F,X)g(\varphi F,Y)
$$
is nondegenerate and $\ind(g^\ast)=\ind(g)-2=0$ hence Riemannian.
A direct computation gives

$$
g^\ast=\frac{1}{y_1^2}(dx_1^2+dy_1^2)+\sum_{k=2}^n (dx_k^2+dy_k^2)+dt^2.
$$
Since
$$
\Phi^\ast=\Phi-g(F,\cdot)\wedge g(\varphi F,\cdot),
$$
it follows that $d\eta=0$ and $d\Phi^\ast=0$. Hence $(M,\varphi,\xi,\eta,g^\ast)$ is an almost co-K\"ahler metric manifold. In particular, $(M,g^\ast)$ is isometric to
$H^2\times \mathbb{R}^{2n-1}$, where $H^2=\{(x_1,y_1):y_1>0\}$ is with metric $\frac{1}{y_1^2}(dx_1^2+dy_1^2)$.
\end{ex}

\begin{ex}
Let $M=\mathbb{R}^{5}$ with coordinates $(x_{1},y_{1},x_{2},y_{2},t)$. Let $a$ be a nonconstant smooth function depending only on $x_{2},y_{2}$. Define an almost contact structure $(\varphi,\xi,\eta)$ on $M$ by
$$
\xi=\partial_{t},\qquad \eta=dt,\qquad \varphi\xi=0,
$$
$$
\varphi(\partial_{x_{1}})=\partial_{y_{1}}+a\partial_{x_{1}},\qquad
\varphi(\partial_{y_{1}})=-(1+a^{2})\partial_{x_{1}}-a\partial_{y_{1}},
$$
$$
\varphi(\partial_{x_{2}})=\partial_{y_{2}},\qquad
\varphi(\partial_{y_{2}})=-\partial_{x_{2}}.
$$
Consider the Riemannian metric
$$
g=dx_{1}^{2}-2adx_{1}dy_{1}+(1+a^{2})dy_{1}^{2}+dx_{2}^{2}+dy_{2}^{2}+dt^{2}.
$$
Then $(M,\varphi,\xi,\eta,g)$ is an almost co-K\"ahler metric manifold. Indeed, its fundamental $2$-form is
$$
\Phi=-dx_{1}\wedge dy_{1}-dx_{2}\wedge dy_{2},
$$
hence $d\eta=0$ and $d\Phi=0$. Observe that neither $g$ nor $\varphi$ is a direct product, since $a$ is nonconstant.

Let now $b$ be a smooth function depending only on $x_{1},y_{1}$ and consider the vector field $F=b\partial_{x_{1}}$.
Then $F\in \ker\eta$ and $\varphi F=b\bigl(\partial_{y_{1}}+a\partial_{x_{1}}\bigr)$.
Direct computations give
$$
d\bigl(g(F,\cdot)\wedge g(\varphi F,\cdot)\bigr)=0.
$$
Also $H^{\perp_{g}}=\operatorname{span}(F,\varphi F)^{\perp_{g}}=\operatorname{span}\{\partial_{x_{2}},\partial_{y_{2}},\xi\}$,
so $H^{\perp_{g}}$ is integrable. The metric

$$
g^{\ast}=(1+b^{2})\bigl(dx_{1}^{2}-2adx_{1}dy_{1}+(1+a^{2})dy_{1}^{2}\bigr)+dx_{2}^{2}+dy_{2}^{2}+dt^{2}.
$$ is again almost co-K\"ahler.
Thus $(M,\varphi,\xi,\eta,g^{\ast})$ is an almost co-K\"ahler metric manifold which, in general, is not a direct product.
\end{ex}

\section{Local product structure (almost contact $\times$ almost complex) and other geometric properties}

\begin{lm}\label{lm::productsplit::acx}
Let $(M_1,\varphi_1,\xi_1,\eta_1)$ be an almost contact manifold and let
$(M_2,J_2)$ be an almost complex manifold. Put
$$
M:=M_1\times M_2
$$
and define an almost contact structure $(\varphi,\xi,\eta)$ on $M$ by
$$
\varphi(X_1,X_2)=\bigl(\varphi_1X_1,J_2X_2\bigr),\qquad
\xi=(\xi_1,0),\qquad
\eta(X_1,X_2)=\eta_1(X_1),
$$
where $X_1\in TM_1$ and $X_2\in TM_2$.

Let $g$ be a pseudo-Riemannian metric on $M$ such that the canonical foliations
$$
M_1\times\{q\},\qquad \{p\}\times M_2
$$
are nondegenerate and orthogonal.
If $(M,\varphi,\xi,\eta,g,\eps)$ is an almost co-K\"ahler pseudo-metric
manifold, then there exist pseudo-Riemannian metrics $g_1$ on $M_1$ and $g_2$ on $M_2$
such that
$$
g=g_1+g_2
$$
and
$(M_1,\varphi_1,\xi_1,\eta_1,g_1,\eps)$
is an almost co-K\"ahler pseudo-metric manifold and
$(M_2,J_2,g_2)$
is an almost pseudo-K\"ahler manifold.

Moreover, if $(M,\varphi,\xi,\eta,g,\eps)$ is a co-K\"ahler pseudo-metric
manifold, then $(M_1,\varphi_1,\xi_1,\eta_1,g_1,\eps)$
is a co-K\"ahler pseudo-metric manifold and
$(M_2,J_2,g_2)$
is a pseudo-K\"ahler manifold.
\end{lm}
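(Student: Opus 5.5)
The plan is to use closedness of the fundamental form $\Phi$ to show that the two ``block'' pieces of $g$ depend only on their own factor. First I write $g=h_1+h_2$, where $h_1(X,Y):=g(X_1,Y_1)$ and $h_2(X,Y):=g(X_2,Y_2)$; this is possible because the foliations are orthogonal. The issue is that \emph{a priori} the coefficients of $h_1$ may depend on the point $q\in M_2$, and those of $h_2$ on $p\in M_1$. Putting $Y=\xi$ into the compatibility condition gives $g(X,\xi)=\eps\eta(X)$. From this,
$$
\Phi(X,\varphi Y)=g(X,\varphi^2Y)=-g(X,Y)+\eps\eta(X)\eta(Y).
$$
Hence $g$ can be recovered from $\Phi,\varphi,\eta$ by $g(X,Y)=-\Phi(X,\varphi Y)+\eps\eta(X)\eta(Y)$. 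Since $\varphi=(\varphi_1,J_2)$ and $\eta=\mathrm{pr}_1^\ast\eta_1$ are product objects, it suffices to show that $\Phi$ restricted to $TM_1\times TM_1$ is independent of $q$, and $\Phi$ restricted to $TM_2\times TM_2$ is independent of $p$.

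For this I will evaluate $d\Phi$ on lifted vector fields. Let $X_1,Y_1$ be lifts of vector fields on $M_1$ and $Z_2$ a lift of a vector field on $M_2$. Then all brackets between a $TM_1$-lift and a $TM_2$-lift vanish, and $[X_1,Y_1]$ is again a $TM_1$-lift. Moreover $\Phi(X_1,Z_2)=g(X_1,J_2Z_2)=0$ by orthogonality, because $\varphi$ preserves both factors. Using the invariant formula for $d\Phi$, every term except one drops out, leaving
$$
0=d\Phi(X_1,Y_1,Z_2)=c\,Z_2\bigl(\Phi(X_1,Y_1)\bigr)
$$
for a nonzero constant $c$ depending on the paper's convention. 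So $\Phi(X_1,Y_1)$ is constant along $\{p\}\times M_2$. The symmetric computation $d\Phi(X_2,Y_2,Z_1)=0$ shows that $\Phi(X_2,Y_2)$ is constant along $M_1\times\{q\}$.

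Consequently the following are well-defined metrics on $M_1$ and $M_2$, and $g=\mathrm{pr}_1^\ast g_1+\mathrm{pr}_2^\ast g_2$:
$$
g_1(X_1,Y_1):=-\Phi(X_1,\varphi_1Y_1)+\eps\eta_1(X_1)\eta_1(Y_1),\qquad
g_2(X_2,Y_2):=-\Phi(X_2,J_2Y_2).
$$
Both are nondegenerate by the nondegeneracy of the foliations. Restricting the identity $g(\varphi X,\varphi Y)=g(X,Y)-\eps\eta(X)\eta(Y)$ to each factor gives two facts. On $M_1$, $(\varphi_1,\xi_1,\eta_1,g_1,\eps)$ is almost contact pseudo-metric. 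On $M_2$, $g_2$ is $J_2$-Hermitian, since $\eta$ vanishes on $TM_2$.

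It remains to check closedness on each factor.
\begin{itemize}
\item The forms $\Phi_1$ and $\omega_2$ are the restrictions of $\Phi$ to slices $M_1\times\{q\}$ and $\{p\}\times M_2$, i.e.\ pullbacks under the slice inclusions. Pullback commutes with $d$, so both are closed.
\item The relation $\eta=\mathrm{pr}_1^\ast\eta_1$ gives $\mathrm{pr}_1^\ast d\eta_1=d\eta=0$, and $\mathrm{pr}_1^\ast$ is injective on forms, so $d\eta_1=0$.
\end{itemize}

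For the co-K\"ahler part, the Nijenhuis tensor of a product endomorphism is computed on lifts. Its value on two $TM_1$-lifts is the lift of $[\varphi_1,\varphi_1]$, on two $TM_2$-lifts it is the lift of $[J_2,J_2]$, and it vanishes on mixed pairs. Since $d\eta\otimes\xi$ likewise lives only on the $M_1$ factor, normality of $(\varphi,\xi,\eta)$ splits into two conditions: $[\varphi_1,\varphi_1]+2d\eta_1\otimes\xi_1=0$, and integrability of $J_2$. Together with the almost (co-)K\"ahler conditions already obtained, these give the co-K\"ahler and pseudo-K\"ahler claims.

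The main obstacle is the step showing independence from the other factor. There one has to be careful that the mixed terms of $d\Phi$ really vanish; this uses the orthogonality, the $\varphi$-invariance of the splitting, and the vanishing of brackets of lifts. One also has to note that an arbitrary tangent vector field can be handled pointwise via lifts, since $d\Phi$ is tensorial.
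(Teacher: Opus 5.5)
Your proof is correct and follows the same route as the paper: restrict the compatibility identity, $d\Phi$ and $d\eta$ to the factors, then split the Nijenhuis tensor on lifts. The one place you go beyond the paper is the step it calls ``direct computation in local frames'' (that $g$ is a product). Your computation $d\Phi(X_1,Y_1,Z_2)=c\,Z_2\bigl(\Phi(X_1,Y_1)\bigr)$, combined with $g(X,Y)=-\Phi(X,\varphi Y)+\eps\eta(X)\eta(Y)$, supplies exactly that justification. It rightly relies on closedness of $\Phi$, since orthogonality of the foliations alone would still allow warped-type metrics.
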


\begin{proof}
Since $d\eta=0$, we immediately get $d\eta_1=0$. Direct computation in local frames yields
 that $g$ is
the direct product of the pseudo-metrics $g_1:=g|_{M_1\times\{q\}}$ and $g_2:=g|_{\{p\}\times M_2}$.
\par We shall show now that $(M_1,\varphi_1,\xi_1,\eta_1,g_1,\eps)$ is an almost co-K\"ahler pseudo-metric manifold. Indeed, for $X,Y\in TM_1$ we compute
\begin{align*}
g_1(\varphi_1X,\varphi_1Y)
&=g\bigl((\varphi_1X,0),(\varphi_1Y,0)\bigr)\\
&=g\bigl(\varphi(X,0),\varphi(Y,0)\bigr)\\
&=g\bigl((X,0),(Y,0)\bigr)-\eps\,\eta(X,0)\eta(Y,0)\\
&=g_1(X,Y)-\eps\,\eta_1(X)\eta_1(Y).
\end{align*}
We also have
$$
g_1(X,\xi_1)=g\bigl((X,0),(\xi_1,0)\bigr)=\eps\,\eta(X,0)=\eps\,\eta_1(X),
$$
so $(M_1,\varphi_1,\xi_1,\eta_1,g_1,\eps)$ is an almost contact pseudo-metric manifold.
Let $\Phi_1$ be  a fundamental form for $g_1$, that is for $X,Y\in TM_1$
$$
\Phi_1(X,Y):=g_1(X,\varphi_1Y)=\Phi\bigl((X,0),(Y,0)\bigr).
$$
Therefore, for all $X,Y,Z\in TM_1$,
$$
d\Phi_1(X,Y,Z)=d\Phi\bigl((X,0),(Y,0),(Z,0)\bigr)=0.
$$
Since also $d\eta_1=0$, we conclude that $(M_1,\varphi_1,\xi_1,\eta_1,g_1,\eps)$ is an almost co-K\"ahler pseudo-metric manifold.
\par Similarly, for $U,V\in TM_2$,
\begin{align*}
g_2(J_2U,J_2V)
&=g\bigl((0,J_2U),(0,J_2V)\bigr)=g\bigl(\varphi(0,U),\varphi(0,V)\bigr)\\
&=g\bigl((0,U),(0,V)\bigr)=g_2(U,V).
\end{align*}
Thus $(M_2,J_2,g_2)$ is an almost pseudo-Hermitian manifold. If $\Omega_2$ is a fundamental form for $g_2$, then for all $U,V,W\in TM_2$
$$
d\Omega_2(U,V,W)=d\Phi\bigl((0,U),(0,V),(0,W)\bigr)=0
$$
so $(M_2,J_2,g_2)$ is an almost pseudo-K\"ahler manifold.

Finally, assume that $(M,\varphi,\xi,\eta,g,\eps)$ is co-K\"ahler. Since $d\eta=0$, normality of $(\varphi,\xi,\eta)$ is equivalent to $[\varphi,\varphi]=0$.
By straightforward computations we obtain
$$
[\varphi_1,\varphi_1]=0
\qquad\text{and}\qquad
[J_2,J_2]=0.
$$
Thus $(\varphi_1,\xi_1,\eta_1)$ is normal and $J_2$ is integrable, what completes the proof of last part.
\end{proof}

\begin{df}\label{df::phihol}
Let $(M,\varphi,\xi,\eta)$ be an almost contact manifold. A vector field
$F\in\XM$ will be called \emph{$\varphi$-holomorphic} if
$$
L_F\varphi=0.
$$
\end{df}

\begin{uw}\label{uw::phihol}
In the paper \cite{BS}, on a normal almost contact manifold
$(M,\varphi,\xi,\eta)$, a local vector field $F$ is called
\emph{contact-holomorphic} if
$$
(L_F\varphi)(Y)=\eta([F,\varphi Y])\xi
$$
for every $Y\in\XM$.
In the present paper we use the stronger condition
$$
L_F\varphi=0
$$
and call such a vector field $\varphi$-holomorphic. However, if
$$
d\eta=0
\qquad\text{and}\qquad
F\in\ker\eta,
$$
then $\eta([F,\varphi Y])=0$ for every $Y\in\XM$. Therefore, in this case, the notions of
contact-holomorphic and $\varphi$-holomorphic vector fields coincide.
\end{uw}

\begin{uw}\label{uw::phihol::cok}
If $(M,\varphi,\xi,\eta,g)$ is a co-K\"ahler pseudo-metric manifold, then
$\nabla\varphi=0$. Consequently, for every $X, F\in\XM$,
$$
L_F\varphi=0\quad \Leftrightarrow \quad \nabla_{\varphi X}F=\varphi(\nabla_XF).
$$
Thus, on a co-K\"ahler pseudo-metric manifold, Definition
\ref{df::phihol} is the direct analogue of the usual notion of a holomorphic
vector field on a K\"ahler manifold.
\end{uw}

\begin{df}\label{df::phiholmap}
Let $(M,\varphi,\xi,\eta)$ and $(\widetilde M,\widetilde\varphi,\widetilde\xi,\widetilde\eta)$
be almost contact manifolds. A smooth map
$$
\Psi:\widetilde M\to M
$$
will be called \emph{$\varphi$-holomorphic} if
\begin{equation}\label{eq::phiholmap}
d\Psi\circ \widetilde\varphi=\varphi\circ d\Psi,
\qquad
d\Psi(\widetilde\xi)=\xi,
\qquad
\widetilde\eta=\Psi^\ast\eta.
\end{equation}
If, moreover, $\Psi$ is a local diffeomorphism, then $\Psi$ will be called a
\emph{local $\varphi$-holomorphism}.
\end{df}

\begin{tw}\label{tw::localprod}
Let $(M,\varphi,\xi,\eta,g,\eps)$ be a co-K\"ahler pseudo-metric manifold and
let $F\in\ker\eta$ be a $\varphi$-holomorphic vector field.
Assume that $g(F_p,F_p)\neq -1$, $g(F_p,F_p)\neq 0$ for all $p\in M$.
If $(M,\varphi,\xi,\eta,g^\ast,\eps)$ is a co-K\"ahler pseudo-metric manifold, where
$
g^\ast$ is defined by Formula (\ref{eq::Gast}),
then $(M,\varphi,\xi,\eta,g,\eps)$ is locally isometric and $\varphi$-holomorphic
to
$$
\bigl(N\times \R^2,\widetilde\varphi,\widetilde\xi,\widetilde\eta,
g|_N+\psi(t,s)(dt^2+ds^2),\eps\bigr),
$$
where $N$ is a leaf of the distribution
$$
H^{\perp_g}=\Span\{F,\varphi F\}^{\perp_g},
$$
$\psi$ is a nowhere vanishing function, and
$(\widetilde\varphi,\widetilde\xi,\widetilde\eta)$ is the almost contact
structure on $N\times\R^2$ given by
\begin{align}
\label{eq::inducedvarphi} \widetilde\varphi(X+a\partial_t+b\partial_s)=\varphi_NX+a\partial_s-b\partial_t,\\
\label{eq::inducedxieta} \widetilde\xi=\xi_N,\qquad \widetilde\eta(X+a\partial_t+b\partial_s)=\eta_N(X)
\end{align}
where $X\in\X(N)$, $a,b\in\Cniesk(N\times\R^2)$ and $(\varphi_N,\xi_N,\eta_N,g_N)$ is induced structure on $N$.
Moreover, under this local identification, $F=\partial_t$, $\varphi F=\partial_s$.
\end{tw}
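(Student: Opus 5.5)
The plan is to build the local chart from the commuting flows of $F$ and $\varphi F$ through a leaf $N$ of $H^{\perp_g}$, and then check that $g$ takes the stated form in these coordinates. Throughout, $\nabla$ denotes the Levi-Civita connection of $g$ and $\lambda:=g(F,F)$. By Remark~\ref{uw::cok::char} we have $\nabla\varphi=0$. Since $g^\ast$ is in particular almost co-K\"ahler, Theorem~\ref{lm::acokequiv} gives two facts:
\begin{itemize}
\item $H^{\perp_g}$ is integrable;
\item $X(\lambda)=g(X,\nabla_FF+\nabla_{\varphi F}\varphi F)$ for all $X\in H^{\perp_g}$.
\end{itemize}

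\textbf{Step 1: algebraic consequences of $\varphi$-holomorphy.} By Remark~\ref{uw::phihol::cok} we have $\nabla_{\varphi X}F=\varphi\nabla_XF$.
\begin{itemize}
\item Putting $X=\xi$ gives $\varphi\nabla_\xi F=0$, so $\nabla_\xi F$ is a multiple of $\xi$. Since $g(\nabla_\xi F,\xi)=-g(F,\nabla_\xi\xi)=0$, we get $\nabla_\xi F=0$, hence $[F,\xi]=0$.
\item $\varphi F$ is again $\varphi$-holomorphic, because $\nabla_{\varphi X}\varphi F=\varphi\nabla_{\varphi X}F=\varphi(\varphi\nabla_XF)=\varphi\nabla_X\varphi F$.
\item $[F,\varphi F]=(L_F\varphi)F=0$, so $F$ and $\varphi F$ commute and $H$ is integrable with a commuting frame.
\item $\nabla_{\varphi F}\varphi F=\varphi^2\nabla_FF=-\nabla_FF$. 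Here we use $\eta(\nabla_FF)=0$, which follows from $\nabla\xi=0$.
\end{itemize}
Substituting the last identity into the condition of Theorem~\ref{lm::acokequiv} gives $X(\lambda)=0$ for all $X\in H^{\perp_g}$. So $\lambda$ is constant along the leaves of $H^{\perp_g}$.

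\textbf{Step 2: the chart.} Fix a leaf $N$ of $H^{\perp_g}$ and define
$$
\Psi(p,t,s)=\mathrm{Fl}^F_t\circ \mathrm{Fl}^{\varphi F}_s(p).
$$
Since $F,\varphi F$ commute and are transversal to $N$, $\Psi$ is a local diffeomorphism with $d\Psi(\partial_t)=F$ and $d\Psi(\partial_s)=\varphi F$. The flows preserve $\varphi$ (both fields are $\varphi$-holomorphic) and preserve $\xi$ (Step 1, and likewise $[\varphi F,\xi]=0$). Because $\xi\in H^{\perp_g}$ is tangent to $N$, $\eta$ is preserved as well. Provided the flows preserve $H^{\perp_g}$, they map $TN$ to $H^{\perp_g}$. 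Then $\Psi$ is $\varphi$-holomorphic with respect to the structure (\ref{eq::inducedvarphi})--(\ref{eq::inducedxieta}), using $\varphi F\mapsto -F$ on the $\R^2$ factor.

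\textbf{Step 3: the metric (main obstacle).} Three things remain to be shown:
\begin{enumerate}
\item[(a)] the flows of $F,\varphi F$ preserve $H^{\perp_g}$, i.e.\ $g([F,X],F)=g([F,X],\varphi F)=0$ for $X\in H^{\perp_g}$, and similarly for $\varphi F$;
\item[(b)] the flows restrict to isometries of $g|_{H^{\perp_g}}$, so that the pulled-back metric on the leaves is $g_N$, independent of $(t,s)$;
\item[(c)] the function $\psi=\lambda\circ\Psi$ depends only on $(t,s)$.
\end{enumerate}
Part (c) is Step 1, and $\psi$ is nowhere vanishing since $\lambda\neq0$. On $H$ the metric is $\lambda(dt^2+ds^2)$, because $g(F,\varphi F)=0$ and $g(\varphi F,\varphi F)=\lambda$.

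For (a), compute $g([F,X],F)=-g(X,\nabla_FF)-\tfrac12X(\lambda)=-g(X,\nabla_FF)$. So one needs $\nabla_FF\in H$, i.e.\ that the leaves of $H$ are totally geodesic. For (b) one needs the $H^{\perp_g}$-part of $\nabla F$ to be skew on $H^{\perp_g}$.

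These two facts do not follow from $g^\ast$ being merely almost co-K\"ahler, so here I would use the full co-K\"ahler hypothesis on $g^\ast$, namely $\nabla^\ast\varphi=0$. The plan is to write $S=\nabla^\ast-\nabla$ via the Koszul formula for $g^\ast=g+\alpha^2+\beta^2$, where $\alpha=g(F,\cdot)$ and $\beta=g(\varphi F,\cdot)$. This expresses $S$ through $\nabla\alpha$, $\nabla\beta$ and $(g^\ast)^{-1}$. I would then impose $S\circ\varphi=\varphi\circ S$ and evaluate on the pairs $(X,Y)\in H^{\perp_g}\times H^{\perp_g}$ and $(F,X)$.

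Combined with $\nabla_{\varphi X}F=\varphi\nabla_XF$, I expect this to force two identities:
\begin{itemize}
\item $g(\nabla_XF,Y)+g(\nabla_YF,X)=0$ for $X,Y\in H^{\perp_g}$, which gives (b) via $L_Fg|_{H^{\perp_g}}=0$;
\item $g(\nabla_FF,X)=0$, which gives (a).
\end{itemize}
The analogous statements for $\varphi F$ follow by applying $\varphi$. The delicate part is this computation with the inverse of $g^\ast$ on $H$, which is a multiple $\lambda(\lambda+1)$ of $g$ there.
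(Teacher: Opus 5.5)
Your Steps 1--2 are fine, and you correctly identify the crux. The slices $N\times\{(t,s)\}$ of the flow chart must be leaves of $H^{\perp_g}$, i.e.\ (a) $\nabla_FF\in\Span\{F,\varphi F\}$. The paper's proof also depends on this: it applies Lemma~\ref{lm::productsplit::acx} to $\Psi^\ast g$, and that lemma's orthogonality hypothesis is exactly (a).

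\textbf{The gap.} Your plan for obtaining (a) cannot work. Normality involves only $(\varphi,\xi,\eta)$, so ``$g^\ast$ co-K\"ahler'' is the same hypothesis as ``$g^\ast$ almost co-K\"ahler'', i.e.\ $d(\alpha\wedge\beta)=0$. Expanding $\nabla^\ast\varphi=0$ therefore yields nothing beyond this condition. For $\varphi$-holomorphic $F$ it is a first-order condition, equivalent to $\nabla_XF=0$ for all $X\in H^{\perp_g}$:
\begin{itemize}
\item $d\alpha|_{H^{\perp_g}}=0$ kills the skew part of $g(\nabla_XF,Y)$;
\item $d\beta|_{H^{\perp_g}}=0$ kills the symmetric part, since $\nabla F$ commutes with $\varphi$;
\item $X(\lambda)=(\varphi X)(\lambda)=0$ kills the $H$-component.
\end{itemize}
Conversely, take any $W\in\ker\eta$. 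The $1$-jet $\nabla_UF=\lambda^{-1}\bigl(\alpha(U)W+\beta(U)\varphi W\bigr)$ commutes with $\varphi$, satisfies $d(\alpha\wedge\beta)=0$, and has $\nabla_FF=W$. So your evaluation on pairs $(X,Y)$ does give (b); in fact it gives $\nabla_XF=0$, so (b) already follows from the almost co-K\"ahler condition. But no pointwise manipulation of $\nabla^\ast\varphi=0$ can produce $g(\nabla_FF,X)=0$, because $\nabla_FF$ is unconstrained at first order.

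\textbf{What is missing} is a second-order (curvature) argument.
\begin{enumerate}
\item Since $\nabla_XF=\nabla_X\varphi F=0$ on the integrable distribution $H^{\perp_g}$, its leaves are totally geodesic and $R(X,Y)F=0$ for $X,Y\in H^{\perp_g}$.
\item In coordinates adapted to both foliations, $g=g_N(x)+g_H(x,y)$. A Christoffel computation gives $R(X,Y)|_H=-[S_X,S_Y]$; this is the Ricci equation for the leaves of $H$, whose normal connection is flat. Here $S_X$ is the shape operator of the $H$-leaves in the normal direction $X$.
\item From $\nabla\varphi=0$ one gets $S_{\varphi X}=\varphi S_X$ and $S_X\varphi=-\varphi S_X$. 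Hence $0=R(X,\varphi X)F=2\varphi S_X^2F$.
\item $g|_H$ is definite, and $S_X$ is $g$-symmetric and traceless on $H$, so $S_X^2=\mu^2\,\mathrm{Id}$. Therefore $\mu=0$ and $S_X=0$.
\end{enumerate}
Thus the leaves of $H$ are totally geodesic, which is (a). Then $H$ and $H^{\perp_g}$ are both $\nabla$-parallel, the flows of $F$ and $\varphi F$ preserve $H^{\perp_g}$, and your Step 2 yields $\Psi^\ast g=g_N+\lambda(t,s)(dt^2+ds^2)$.
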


\begin{proof}
Since $(M,\varphi,\xi,\eta,g^\ast,\eps)$ is co-K\"ahler, Theorem \ref{lm::acokequiv} implies that the distribution $H^{\perp_g}=\Span\{F,\varphi F\}^{\perp_g}$
is integrable.
\par Fix a point $p_0\in M$ and let $N$ be the leaf of
$H^{\perp_g}$ through $p_0$.
Since $0=L_F\varphi=[F,\varphi F]$
we have that the distribution $\Span\{F,\varphi F\}$ is also integrable.
Since
$$
g(F,F)\neq 0,\qquad g(F,\varphi F)=0,\qquad g(\varphi F,\varphi F)=g(F,F)
$$
this distribution is nondegenerate. By definition, it is orthogonal to
$H^{\perp_g}$.
Let $\Sigma$ be the leaf of $\Span\{F,\varphi F\}$ through $p_0$. Since $[F,\varphi F]=0$
and $F,\varphi F$ are linearly independent, there exist local coordinates
$(t,s)$ on $\Sigma$ such that
$$
F=\partial_t,\qquad \varphi F=\partial_s.
$$
Because $H^{\perp_g}$ and $\Span\{F,\varphi F\}$ are complementary integrable
distributions, there exists a local diffeomorphism
$$
\Psi:N\times \R^2\ni (x,t,s)\mapsto \Phi_t^{F}\circ \Phi_s^{\varphi F}(x)\in M
$$
from a neighbourhood of $(p_0,0,0)$ onto a neighbourhood of $p_0$ such that
$$
d\Psi(\partial_t)=F,\qquad d\Psi(\partial_s)=\varphi F,
$$
where $\Phi^{F},\Phi^{\varphi F}$ are the flow of $F$ and $\varphi F$, respectively.
Since $\xi\in H^{\perp_g}$ and $\varphi(H^{\perp_g})\subset H^{\perp_g}$ we can restrict $(\varphi,\xi,\eta)$ to $N$, and obtain an almost contact structure
$(\varphi_N,\xi_N,\eta_N)$ on the leaf $N$.
 On the other hand,
$\Span\{F,\varphi F\}$ is $\varphi$-invariant and, with respect to the basis
$\{\partial_t,\partial_s\}$, the restriction of $\varphi$ to this distribution
is the standard complex structure on $\R^2$.

Let $(\widetilde\varphi,\widetilde\xi,\widetilde\eta)$ denotes the almost
contact structure on $N\times\R^2$ defined by \eqref{eq::inducedvarphi} and \eqref{eq::inducedxieta}, then
$$
d\Psi\circ \widetilde\varphi=\varphi\circ d\Psi,
\qquad
d\Psi(\widetilde\xi)=\xi,
\qquad
\widetilde\eta=\Psi^\ast\eta.
$$
That is, $\Psi$ is a local $\varphi$-holomorphism.
Moreover,
$$
(N\times\R^2,\widetilde\varphi,\widetilde\xi,\widetilde\eta,\Psi^\ast g,\eps)
$$
is a co-K\"ahler pseudo-metric manifold. Therefore, by
Lemma \ref{lm::productsplit::acx}, there exist pseudo-metrics $h_1$ on $N$ and
$h_2$ on $\R^2$ such that $\Psi^\ast g=h_1+h_2$, $(N,\varphi_N,\xi_N,\eta_N,h_1,\eps)$ is a co-K\"ahler pseudo-metric manifold
and $(\R^2,J_0,h_2)$ is a pseudo-K\"ahler manifold, where $J_0$ is the standard
complex structure on $\R^2$.
By construction $h_1=g|_N$. Since $h_2$ is compatible with the standard
complex structure $J_0$ on $\R^2$, we have $h_2(\partial_t,\partial_t)=h_2(\partial_s,\partial_s)$, $h_2(\partial_t,\partial_s)=0$.
Thus $h_2=\psi(t,s)(dt^2+ds^2)$ for a nowhere vanishing function $\psi$.
Therefore
$$
\Psi^\ast g=g|_N+\psi(t,s)(dt^2+ds^2),
$$
and $\Psi$ is a local isometry and a local $\varphi$-holomorphism.
\end{proof}

\begin{stw}\label{stw::difference}
Let $(M,\varphi,\xi,\eta,g,\eps)$ be an almost contact pseudo-metric manifold
and let $F\in\ker\eta$ be a vector field such that $g(F_p,F_p)\neq -1$ for all $p\in M$ and $g^\ast$ defined by Formula (\ref{eq::Gast}).

Let $\widehat{\nabla}$ and $\widehat{\nabla}^\ast$ denote the Levi-Civita
connections of $g$ and $g^\ast$, respectively, and define
$$
D(U,V):=\widehat{\nabla}^\ast_U V-\widehat{\nabla}_U V.
$$
Then $D$ is symmetric and
\begin{equation}\label{eq::difference}
\begin{aligned}
2g^\ast(D(U,V),W)
&=\alpha(W)(L_Fg)(U,V)+\alpha(V)d\alpha(U,W)+\alpha(U)d\alpha(V,W)\\
&\quad+\beta(W)(L_{\varphi F}g)(U,V)
+\beta(V)d\beta(U,W)+\beta(U)d\beta(V,W)
\end{aligned}
\end{equation}
for all $U,V,W\in\XM$, where 
$\alpha:=g(F,\cdot),\ \beta:=g(\varphi F,\cdot)
$.
Moreover, if $g(F_p,F_p)\neq 0$ for all $p\in M$, then
\begin{equation}\label{eq::divergence}
\operatorname{div}_{g^\ast}U=\operatorname{div}_gU+\frac{U(\lambda)}{1+\lambda},
\end{equation}
for every $U\in\XM$, where $\lambda:=g(F,F)$.
\end{stw}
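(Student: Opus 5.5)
The plan is to treat $g^\ast$ as the perturbation $g^\ast=g+h$ with $h:=\alpha\otimes\alpha+\beta\otimes\beta$, where $\alpha=g(F,\cdot)$ and $\beta=g(\varphi F,\cdot)$. Then I would compare the two Koszul formulas.

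\emph{Symmetry of $D$.} Since $\widehat{\nabla}$ and $\widehat{\nabla}^\ast$ are both torsion free, $D(U,V)-D(V,U)=0$.

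\emph{The general difference formula.} Subtract the Koszul formula for $\widehat{\nabla}$ from that for $\widehat{\nabla}^\ast$. Rewrite every derivative $U(h(V,W))$ and every bracket term through $\widehat{\nabla}$, using that $\widehat{\nabla}$ is torsion free and metric for $g$. This should give the standard identity
$$2g^\ast(D(U,V),W)=(\widehat{\nabla}_U h)(V,W)+(\widehat{\nabla}_V h)(U,W)-(\widehat{\nabla}_W h)(U,V).$$

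\emph{Inserting $h=\alpha^2+\beta^2$.} By the Leibniz rule,
$$(\widehat{\nabla}_U(\alpha\otimes\alpha))(V,W)=(\widehat{\nabla}_U\alpha)(V)\,\alpha(W)+\alpha(V)\,(\widehat{\nabla}_U\alpha)(W).$$
Collect the resulting six terms according to the factors $\alpha(W)$, $\alpha(V)$ and $\alpha(U)$.
\begin{itemize}
\item The coefficient of $\alpha(W)$ is $(\widehat{\nabla}_U\alpha)(V)+(\widehat{\nabla}_V\alpha)(U)=g(\widehat{\nabla}_UF,V)+g(\widehat{\nabla}_VF,U)=(L_Fg)(U,V)$.
\item The coefficient of $\alpha(V)$ is $(\widehat{\nabla}_U\alpha)(W)-(\widehat{\nabla}_W\alpha)(U)=d\alpha(U,W)$.
\item The coefficient of $\alpha(U)$ is $d\alpha(V,W)$ in the same way.
\end{itemize}
Here I use the convention $d\alpha(X,Y)=(\widehat{\nabla}_X\alpha)Y-(\widehat{\nabla}_Y\alpha)X$, which is the one implicit in the proof of Theorem~\ref{lm::acokequiv}, where $d\alpha(X,F)=\tfrac12X(g(F,F))-g(\nabla_FF,X)$. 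The $\beta$ terms are handled identically with $\varphi F$ in place of $F$, and together these give \eqref{eq::difference}. The main obstacle I expect in this step is keeping the factor and sign conventions for $d$ consistent with the rest of the paper; the algebra itself is routine.

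\emph{Divergence formula.} Use $\operatorname{div}_{g^\ast}U-\operatorname{div}_gU=\operatorname{tr}\bigl(V\mapsto D(V,U)\bigr)$. Equivalently, $\operatorname{div}_{g^\ast}U=\operatorname{div}_gU+\tfrac12U\bigl(\log|\det(g^{-1}g^\ast)|\bigr)$, since the volume forms satisfy $dv_{g^\ast}=|\det(g^{-1}g^\ast)|^{1/2}dv_g$.

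Now $g^{-1}g^\ast=A$, where $A(Y)=Y+\alpha(Y)F+\beta(Y)\varphi F$ is the operator from Lemma~\ref{lm::gast::general}. Assume $\lambda\neq0$, so that $TM=H\oplus H^{\perp_g}$ as in Proposition~\ref{lem::indeks:gstar}. Since $g(F,\varphi F)=0$ and $g(\varphi F,\varphi F)=\lambda$, we get:
\begin{itemize}
\item $A$ acts on $H$ as $(1+\lambda)\mathrm{Id}$;
\item $A$ is the identity on $H^{\perp_g}$.
\end{itemize}
Hence $\det A=(1+\lambda)^2$, which is nonzero because $\lambda\neq-1$. Therefore
$$\tfrac12U\bigl(\log(1+\lambda)^2\bigr)=\frac{U(\lambda)}{1+\lambda},$$
which is \eqref{eq::divergence}.

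As a cross-check, one could instead trace \eqref{eq::difference} directly using a $g^\ast$-orthonormal frame adapted to $H\oplus H^{\perp_g}$. I would only fall back on that route if the volume-form argument raised sign issues in indefinite signature; the absolute value in $|\det|$ already handles those.
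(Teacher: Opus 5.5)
Your proposal is correct. For \eqref{eq::difference} you follow essentially the paper's route. The paper subtracts the two Koszul formulas on commuting fields and rewrites $U\alpha(V)+V\alpha(U)=(L_Fg)(U,V)+2g(F,\widehat{\nabla}_UV)$. You package the same subtraction as $(\widehat{\nabla}_Uh)(V,W)+(\widehat{\nabla}_Vh)(U,W)-(\widehat{\nabla}_Wh)(U,V)$ and expand by Leibniz. Your grouping of the six terms is right, and your convention $d\alpha(X,Y)=(\widehat{\nabla}_X\alpha)Y-(\widehat{\nabla}_Y\alpha)X$ is the paper's (compare Lemma \ref{lm::dalpha}).

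For the divergence formula you genuinely depart from the paper. The paper takes the $g^\ast$-pseudo-orthonormal frame $e_1,\dots,e_{2n-1},F/\sqrt{|\lambda(1+\lambda)|},\varphi F/\sqrt{|\lambda(1+\lambda)|}$ adapted to $H^{\perp_g}\oplus H$. It then traces $V\mapsto D(V,U)$ through \eqref{eq::difference}, obtaining $g^\ast(D(X,U),X)=0$ for $X\in H^{\perp_g}$ together with \eqref{eq::DF} and \eqref{eq::DphiF}.

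You instead use the density identity $\operatorname{div}_{g^\ast}U=\operatorname{div}_gU+\frac12U(\log|\det A|)$ with $A=g^{-1}g^\ast$, plus the eigenvalue computation $\det A=(1+\lambda)^2$. This bypasses \eqref{eq::difference} entirely and needs no frame normalisation or signature bookkeeping; note $\det A>0$, so the absolute value plays no role. Your route is shorter and also slightly stronger: it does not need $\lambda\neq 0$. Since $g(F,\varphi F)=0$ and $g(\varphi F,\varphi F)=\lambda$, the matrix determinant lemma gives $\det(\mathrm{Id}+F\otimes\alpha+\varphi F\otimes\beta)=\det\begin{pmatrix}1+\lambda&0\\0&1+\lambda\end{pmatrix}$. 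Hence \eqref{eq::divergence} holds wherever $\lambda\neq-1$.

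The paper's route, in exchange, keeps everything inside the single identity \eqref{eq::difference}. The component formulas it produces are the ones that reappear in the proof of Theorem \ref{tw::radical}, where they give $(L_Ug)(F,F)=(L_Ug)(\varphi F,\varphi F)=0$.
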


\begin{proof}
The symmetry of $D$ is an immediate consequence of the fact that $\widehat{\nabla}^\ast$ and $\widehat{\nabla}$ are torsion-free connections.
\par Since both sides of \eqref{eq::difference} are tensorial, we may assume that the
Lie brackets of the vector fields under consideration vanish. Using the definition of $g^\ast$ and the Koszul
formula for $g^\ast$ and $g$, we obtain
$$
2g^\ast(\widehat{\nabla}^\ast_UV,W)
=2g(\widehat{\nabla}_UV,W)
+\alpha(V)d\alpha(U,W)+\alpha(U)d\alpha(V,W)
+\beta(V)d\beta(U,W)+$$ $$ +\beta(U)d\beta(V,W)+\alpha(W)\bigl(U\alpha(V)+V\alpha(U)\bigr)
+\beta(W)\bigl(U\beta(V)+V\beta(U)\bigr).
$$

Moreover,
$$
U\alpha(V)+V\alpha(U)
=(L_Fg)(U,V)+2g(F,\widehat{\nabla}_UV),
$$
and
$$
U\beta(V)+V\beta(U)
=(L_{\varphi F}g)(U,V)+2g(\varphi F,\widehat{\nabla}_UV).
$$
Therefore
\begin{align*}
2g^\ast(\widehat{\nabla}^\ast_UV,W)
&=2g^\ast(\widehat{\nabla}_UV,W)\\
&\quad+\alpha(W)(L_Fg)(U,V)+\alpha(V)d\alpha(U,W)+\alpha(U)d\alpha(V,W)\\
&\quad+\beta(W)(L_{\varphi F}g)(U,V)
+\beta(V)d\beta(U,W)+\beta(U)d\beta(V,W).
\end{align*}
Thus we obtain \eqref{eq::difference}.
\par Now assume that $g(F_p,F_p)\neq 0$ for all $p\in M$, and let $\lambda:=g(F,F)$.
Since $F\in\ker\eta$, we have $g(F,\varphi F)=0, g(\varphi F,\varphi F)=g(F,F)=\lambda$.
We have $TM=H^{\perp_g}\oplus \Span\{F,\varphi F\}$, where $H^{\perp_g}=\Span\{F,\varphi F\}^{\perp_g}$ and $g^\ast(F,F)=g^\ast(\varphi F,\varphi F)=\lambda(1+\lambda)$.
Using  \eqref{eq::difference} we get $g^\ast(D(X,U),X)=0$ for $X\in H^{\perp_g}$.
Since $g^\ast(\cdot,X)=g(\cdot,X)$ for every $X\in H^{\perp_g}$ we also obtain $g(D(X,U),X)=0$.
Moreover, using again \eqref{eq::difference} we get
\begin{equation}\label{eq::DF}
g^\ast(D(F,U),F)=\frac{\lambda}{2}U(\lambda)
\end{equation}
and
\begin{equation}\label{eq::DphiF}
g^\ast(D(\varphi F,U),\varphi F)=\frac{\lambda}{2}U(\lambda).
\end{equation}
Let
$$
e_1,\dots,e_{2n-1}
$$
be a local $g$-pseudo-orthonormal basis of $H^{\perp_g}$ at a point and let $\eps_i:=g(e_i,e_i)\in\{-1,1\}$.
Then
$$
e_1,\dots,e_{2n-1},\frac{1}{\sqrt{|\lambda(1+\lambda)|}}F,\frac{1}{\sqrt{|\lambda(1+\lambda)|}}\varphi F
$$
is a local $g^\ast$-pseudo-orthonormal basis of $TM$. Therefore
\begin{align*}
\operatorname{div}_{g^\ast}U
&=\sum_{i=1}^{2n-1}\eps_i\,g^\ast(\widehat{\nabla}^\ast_{e_i}U,e_i)
+\frac{1}{\lambda(1+\lambda)}g^\ast(\widehat{\nabla}^\ast_FU,F)
+\frac{1}{\lambda(1+\lambda)}g^\ast(\widehat{\nabla}^\ast_{\varphi F}U,\varphi F)\\
&=\sum_{i=1}^{2n-1}\eps_i\,g(\widehat{\nabla}_{e_i}U,e_i)
+\frac{1}{\lambda}g(\widehat{\nabla}_FU,F)
+\frac{1}{\lambda}g(\widehat{\nabla}_{\varphi F}U,\varphi F)\\
&\quad+\frac{1}{\lambda(1+\lambda)}g^\ast(D(F,U),F)
+\frac{1}{\lambda(1+\lambda)}g^\ast(D(\varphi F,U),\varphi F)\\
&=\operatorname{div}_gU+\frac{U(\lambda)}{1+\lambda},
\end{align*}
where in the last step we used \eqref{eq::DF} and \eqref{eq::DphiF}. This proves
\eqref{eq::divergence}.
\end{proof}

\begin{wn}\label{wn::secondfund}
Let $(M,\varphi,\xi,\eta,g,\eps)$ be an almost contact pseudo-metric manifold
and let $F\in\ker\eta$ be a vector field such that $g(F_p,F_p)\neq -1$, $g(F_p,F_p)\neq 0$ for all $p\in M$. Let
$
g^\ast$ be defined by Formula (\ref{eq::Gast})
and
$$
H^{\perp_g}:=\Span\{F,\varphi F\}^{\perp_g}.
$$
Assume that $H^{\perp_g}$ is an integrable distribution. If $h$ and $h^\ast$
denote the second fundamental forms of the leaves of $H^{\perp_g}$ in
$(M,g)$ and $(M,g^\ast)$, respectively, then
$$
h^\ast(X,Y)=\frac{1}{1+\lambda}h(X,Y)
$$
for all $X,Y\in\Gamma(H^{\perp_g})$, where $\lambda:=g(F,F)$.
\end{wn}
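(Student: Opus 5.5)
The plan is to compare the normal components of $\widehat{\nabla}^\ast_XY$ and $\widehat{\nabla}_XY$ for $X,Y$ tangent to a leaf, using the difference tensor $D$ from Proposition \ref{stw::difference}.

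\emph{Step 1: the two normal bundles coincide.} For $X\in H^{\perp_g}$ we have $\alpha(X)=\beta(X)=0$. Hence
$$
g^\ast(X,F)=g(X,F)+\alpha(X)\lambda+\beta(X)g(F,\varphi F)=0,
$$
and likewise $g^\ast(X,\varphi F)=0$. So $\Span\{F,\varphi F\}$ is the normal bundle of the leaves for both $g$ and $g^\ast$. Moreover $g^\ast$ restricted to this span is diagonal in the basis $F,\varphi F$, with $g^\ast(F,F)=g^\ast(\varphi F,\varphi F)=\lambda(1+\lambda)\neq 0$. Since $\widehat{\nabla}^\ast_XY=\widehat{\nabla}_XY+D(X,Y)$, we get
$$
h^\ast(X,Y)=h(X,Y)+D(X,Y)^{\perp},
$$
where $\perp$ denotes projection onto $\Span\{F,\varphi F\}$ along $H^{\perp_g}$.

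\emph{Step 2: compute the normal part of $D$.} Apply \eqref{eq::difference} with $U=X$, $V=Y$ in $H^{\perp_g}$ and $W=F$. All terms containing $\alpha(X),\alpha(Y),\beta(X),\beta(Y)$ vanish, and $\beta(F)=g(\varphi F,F)=0$, $\alpha(F)=\lambda$. This leaves
$$
2g^\ast(D(X,Y),F)=\lambda\,(L_Fg)(X,Y).
$$
Using the Levi-Civita property and $g(F,X)=g(F,Y)=0$,
$$
(L_Fg)(X,Y)=g(\widehat{\nabla}_XF,Y)+g(X,\widehat{\nabla}_YF)=-g(F,\widehat{\nabla}_XY+\widehat{\nabla}_YX).
$$
Integrability of $H^{\perp_g}$ gives $g(F,[X,Y])=0$, so this equals $-2g(F,\widehat{\nabla}_XY)=-2g(F,h(X,Y))$. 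Therefore
$$
g^\ast(D(X,Y),F)=-\lambda\,g(h(X,Y),F).
$$
The same computation with $W=\varphi F$ gives $g^\ast(D(X,Y),\varphi F)=-\lambda\,g(h(X,Y),\varphi F)$.

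\emph{Step 3: compare coefficients.} Write $h(X,Y)=aF+b\varphi F$ and $D(X,Y)^\perp=cF+d\varphi F$. Step 2 then reads $c\lambda(1+\lambda)=-\lambda\cdot a\lambda$. Since $\lambda\neq0$ and $\lambda\neq-1$, this gives $c=-a\lambda/(1+\lambda)$, and similarly $d=-b\lambda/(1+\lambda)$. Hence
$$
h^\ast=(a+c)F+(b+d)\varphi F=\frac{1}{1+\lambda}h.
$$

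The only delicate point is the symmetrization in Step 2. It is exactly where integrability of $H^{\perp_g}$ is needed: it lets us replace $\widehat{\nabla}_YX$ by $\widehat{\nabla}_XY$ modulo tangential terms. The rest is bookkeeping with \eqref{eq::difference}.
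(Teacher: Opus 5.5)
Your proof is correct and follows essentially the same route as the paper. Both arguments identify the common normal bundle $\Span\{F,\varphi F\}$, write $h^\ast-h=D(X,Y)^\perp$, and evaluate \eqref{eq::difference} at $W=F,\varphi F$ to get $g^\ast(D(X,Y),F)=-\lambda\, g(h(X,Y),F)$; integrability enters there, which the paper phrases as symmetry of $h$. Your explicit coefficient comparison is just a concrete version of the paper's use of $g^\ast(h(X,Y),F)=(1+\lambda)g(h(X,Y),F)$.
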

\begin{proof}
We see that for every $X\in\Gamma(H^{\perp_g})$ we have $g^\ast(X,F)=g(X,F)=0$, $g^\ast(X,\varphi F)=g(X,\varphi F)=0$.
Thus the leaves of $H^{\perp_g}$ have the same normal bundle with respect to $g$ and $g^\ast$ $H:=\Span\{F,\varphi F\}$.
Let $X,Y\in\Gamma(H^{\perp_g})$. Since
$$
\widehat{\nabla}^\ast_XY=\widehat{\nabla}_XY+D(X,Y),
$$
we get
$$
h^\ast(X,Y)-h(X,Y)=D(X,Y)^\perp,
$$
where ${}^\perp$ denotes the projection onto $H$.
Now taking $U=X$, $V=Y$ and $W=F$ in \eqref{eq::difference}, since $\alpha(X)=\alpha(Y)=\beta(X)=\beta(Y)=0$ and $\widehat{\nabla}g=0$ we obtain
$$
2g^\ast(D(X,Y),F)=\lambda(L_Fg)(X,Y)=\lambda \bigl(g(\widehat{\nabla}_XF,Y)+g(X,\widehat{\nabla}_YF)\bigr).
$$
Since $g(F,Y)=g(F,X)=0$ we have
$$
g(\widehat{\nabla}_XF,Y)=-g(\widehat{\nabla}_XY,F),
\qquad
g(X,\widehat{\nabla}_YF)=-g(\widehat{\nabla}_YX,F).
$$
Since $h$ is symmetric we obtain $(L_Fg)(X,Y)=-2g(h(X,Y),F)$.
Therefore
\begin{equation}\label{eq::hF}
g^\ast(D(X,Y),F)=-\lambda\,g(h(X,Y),F).
\end{equation}
In similar way, using \eqref{eq::difference}, we get
\begin{equation}\label{eq::hphiF}
g^\ast(D(X,Y),\varphi F)=-\lambda\,g(h(X,Y),\varphi F).
\end{equation}
On the other hand
\begin{align*}
g^\ast(h^\ast(X,Y),F)-g^\ast(h(X,Y),F)
&=g^\ast(D(X,Y),F),\\
g^\ast(h^\ast(X,Y),\varphi F)-g^\ast(h(X,Y),\varphi F)
&=g^\ast(D(X,Y),\varphi F).
\end{align*}
Moreover we have
$$
g^\ast(h(X,Y),F)=(1+\lambda)g(h(X,Y),F),
$$
and
$$
g^\ast(h(X,Y),\varphi F)=(1+\lambda)g(h(X,Y),\varphi F).
$$

Since $F$ and $\varphi F$ span the normal bundle of the leaves of
$H^{\perp_g}$ using  \eqref{eq::hF} and \eqref{eq::hphiF} we get
$$
h^\ast(X,Y)=\frac{1}{1+\lambda}h(X,Y).
$$
\end{proof}

\begin{uw}\label{uw::secondfund}
Under the assumptions of Corollary \ref{wn::secondfund}, we have
$$
h(X,Y)=\frac{1}{\lambda}\Bigl(-g(\widehat{\nabla}_XF,Y)\,F-g(\widehat{\nabla}_X\varphi F,Y)\,\varphi F\Bigr)
$$
for all $X,Y\in\Gamma(H^{\perp_g})$.
If moreover $(M,\varphi,\xi,\eta,g,\eps)$ is co-K\"ahler then $\widehat{\nabla}\varphi=0$ and hence $\widehat{\nabla}_X\varphi F=\varphi(\widehat{\nabla}_XF)$.
Therefore
$$
h(X,Y)=\frac{1}{\lambda}\Bigl(-g(\widehat{\nabla}_XF,Y)\,F+g(\widehat{\nabla}_XF,\varphi Y)\,\varphi F\Bigr).
$$
\end{uw}

\begin{lm}\label{lm::cok::killing}
Let $(M,\varphi,\xi,\eta,g)$ be a compact co-K\"ahler metric manifold and let
$U\in\XM$ be a $\varphi$-holomorphic vector field such that
$$
\operatorname{div}_gU=0.
$$
Then $U$ is Killing, that is,
$$
L_Ug=0.
$$
\end{lm}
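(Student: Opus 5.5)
This is the co-Kähler analogue of the classical fact that on a compact Kähler manifold a holomorphic vector field with vanishing divergence is Killing. I will adapt the standard Bochner-type integral argument. Since $g$ is Riemannian here and $M$ is compact, integration by parts against $dV_g$ is available. The two local inputs are $\widehat{\nabla}\varphi=0$ (Remark \ref{uw::cok::char}) and, by Remark \ref{uw::phihol::cok}, the fact that $\varphi$-holomorphicity of $U$ is equivalent to
\begin{equation*}
\widehat{\nabla}_{\varphi X}U=\varphi\,\widehat{\nabla}_XU\qquad\text{for all }X .
\end{equation*}
Write $A:=\widehat{\nabla}U$, viewed as an endomorphism field. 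The holomorphicity condition says $A\varphi=\varphi A$.

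\textbf{Step 1: decomposition of $A$.} Split $A=S+K$ into its $g$-symmetric and $g$-skew parts, so that $L_Ug=2g(S\cdot,\cdot)$. The goal becomes $S=0$.

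\textbf{Step 2: a Bochner identity.} On any compact Riemannian manifold the standard formula (obtained by integrating $\operatorname{div}(\widehat{\nabla}_UU-(\operatorname{div}U)U)$) gives
\begin{equation*}
\int_M\bigl(\operatorname{Ric}(U,U)+\operatorname{tr}(A^2)-(\operatorname{div}U)^2\bigr)\,dV_g=0.
\end{equation*}
Since $\operatorname{div}U=0$, this yields $\int_M\operatorname{tr}(A^2)=-\int_M\operatorname{Ric}(U,U)$. The skew-adjoint counterpart is the identity for $\Delta U$ or the Killing-type operator.

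\textbf{Step 3: use holomorphicity to express $\operatorname{Ric}(U,U)$.} I would prove the pointwise identity
\begin{equation*}
\sum_i\bigl(\widehat{\nabla}^2_{e_i,e_i}U\bigr)=-\operatorname{Ric}(U)
\end{equation*}
for $\varphi$-holomorphic $U$, modulo the $\xi$-direction. The argument is the Kähler one: choose a $\varphi$-adapted orthonormal frame $\{e_a,\varphi e_a,\xi\}$ and use the commutation $A\varphi=\varphi A$ together with $\widehat{\nabla}\varphi=0$. The curvature relation $R(X,Y)\varphi=\varphi R(X,Y)$ lets one convert $\sum_a\bigl(R(e_a,\varphi e_a)\bigr)$ terms into the Ricci tensor.

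I expect the $\xi$-direction to be the main obstacle. On a co-Kähler manifold $\xi$ is parallel, so $R(\cdot,\cdot)\xi=0$ and $\operatorname{Ric}(\xi,\cdot)=0$. The holomorphicity condition at $X=\xi$ gives $\varphi\widehat{\nabla}_\xi U=0$, i.e.\ $\widehat{\nabla}_\xi U\parallel\xi$. Also, $\eta(U)$ has $d(\eta(U))=g(\widehat{\nabla}U,\xi)$ composed appropriately, and the relation $\eta(A\varphi X)=\eta(\varphi AX)=0$ shows $\eta\circ A$ vanishes on $\ker\eta$. So $\eta(U)$ is constant along $\ker\eta$, and the $\xi$-block of $A$ decouples. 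This is what should make the Kähler computation go through on $\ker\eta$.

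\textbf{Step 4: an alternative route avoiding curvature bookkeeping.} Since $A$ commutes with $\varphi$, so do $S$ and $K$. Because $\varphi$ is $g$-skew, $\varphi S$ is skew and $\varphi K$ is symmetric. Consider the 1-forms $\theta:=g(U,\cdot)$ and $\theta\circ\varphi=-g(\varphi U,\cdot)$. Then
\begin{equation*}
d\theta=2g(K\cdot,\cdot),\qquad \delta\theta=-\operatorname{div}U=0.
\end{equation*}
Holomorphicity together with $\widehat{\nabla}\varphi=0$ gives $\widehat{\nabla}(\varphi U)=\varphi A$. Its symmetric part is $\varphi K$ and its skew part is $\varphi S$, and $\operatorname{div}(\varphi U)=\operatorname{tr}(\varphi A)=\operatorname{tr}(\varphi K)$.

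The plan is then to show that $\varphi U$ is harmonic modulo the $\xi$-component. A Weitzenböck/Bochner argument for the coclosed forms $\theta$ and $\theta\circ\varphi$ on a compact manifold, combined with the identity $\operatorname{Ric}(\varphi X,\varphi Y)=\operatorname{Ric}(X,Y)$ valid on co-Kähler manifolds, should give
\begin{equation*}
\int_M|S|^2=\int_M|\varphi S|^2=\tfrac14\int_M|d(\theta\circ\varphi)|^2,
\end{equation*}
and show that this equals $0$ using $\operatorname{div}U=0$. Hence $S=0$, i.e.\ $L_Ug=0$.
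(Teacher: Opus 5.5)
Your Steps 1--3 take a genuinely different route from the paper. The paper argues globally. It lifts $U$ to a finite cover $K\times S^1$ given by the structure theorem for compact co-K\"ahler manifolds, writes $\bar U=X+a\,\partial_t$, shows that $a$ is constant and that $X$ is a divergence-free holomorphic field on the compact K\"ahler manifold $K$, and then cites the K\"ahler theorem. Your Yano--Bochner argument is intrinsic, so once completed it reproves the K\"ahler case instead of citing it. It also does not need the pulled-back structure on $K\times S^1$ to be the standard product one. That identification needs justification, because the structure theorem is a statement up to diffeomorphism: on a flat $T^3$ with $\xi$ in an irrational direction, the leaves of $\ker\eta$ remain dense on every finite cover. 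As written, however, your proof stops exactly where you anticipated trouble, and Step 4 does not close the gap.

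\textbf{The gap.} What is missing is the $\xi$-part of the trace identity in Step 3, and the final combination with Step 2.

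Put $A=\widehat{\nabla}U$ and $c:=\eta(\widehat{\nabla}_\xi U)$. Your observations give $A\xi=c\,\xi$ and $\eta\circ A=c\,\eta$. Since $\widehat{\nabla}\eta=0$, this yields $\eta(\widehat{\nabla}^2_{Y,Z}U)=Y(c)\,\eta(Z)$.

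Work in a frame $\{e_a,\varphi e_a,\xi\}$ and use $\widehat{\nabla}^2_{Y,\varphi Z}U=\varphi\,\widehat{\nabla}^2_{Y,Z}U$, the Ricci identity and $\varphi^2=-I+\eta\otimes\xi$. You get $\widehat{\nabla}^2_{e_a,e_a}U+\widehat{\nabla}^2_{\varphi e_a,\varphi e_a}U=\eta(\widehat{\nabla}^2_{e_a,e_a}U)\,\xi+\varphi R(\varphi e_a,e_a)U$. The first term vanishes, and the second sums to $-\operatorname{Ric}(U)$.

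The trace, however, also contains $\widehat{\nabla}^2_{\xi,\xi}U=\xi(c)\,\xi$. Holomorphy alone does not kill this term: take $U=\sin t\,\partial_t$ on $K\times S^1$. This is where $\operatorname{div}U=0$ must enter. Since $R(\xi,\cdot)=0$, the Ricci identity gives $(\widehat{\nabla}_\xi A)Y=(\widehat{\nabla}_YA)\xi=Y(c)\,\xi$. Hence $\xi(c)=\operatorname{tr}(\widehat{\nabla}_\xi A)=\xi(\operatorname{div}U)=0$.

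Only then is $\operatorname{tr}\widehat{\nabla}^2U=-\operatorname{Ric}(U)$, which gives $\int_M|A|^2=\int_M\operatorname{Ric}(U,U)$. Adding your Step 2 yields $\int_M(|A|^2+\operatorname{tr}A^2)=2\int_M|S|^2=0$, that is, $S=0$. You never write this combination down.

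There is also an alternative that avoids using $\xi(c)=0$. Integrating $\xi(c)\,\eta(U)$ by parts with $\xi(\eta(U))=c$ gives $2\int_M|S|^2=\int_M(\operatorname{div}U)^2+\int_M c^2$ for every $\varphi$-holomorphic $U$. Since $|S|^2\ge c^2$, this again forces $S=0$ when $\operatorname{div}U=0$.

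\textbf{Step 4 should be dropped}, for three reasons. First, $\theta\circ\varphi$ is not coclosed in general: $\delta(\theta\circ\varphi)=\operatorname{div}(\varphi U)=\operatorname{tr}(\varphi K)$. This is nonzero, for instance, for a Hamiltonian Killing field $U=J\operatorname{grad}v$ on a K\"ahler factor, where $\varphi U=-\operatorname{grad}v$. Second, the claimed equality $\int|S|^2=\int|\varphi S|^2$ already presupposes $c=0$, because $|\varphi S|^2=|S|^2-c^2$. Third, no mechanism is offered for the final vanishing.
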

\begin{proof}
By the structure theorem for compact co-K\"ahler manifolds, see \cite{BO},
there exists a finite covering $\pi:K\times S^1\to M$
such that $K$ is a compact K\"ahler manifold and the pull-back co-K\"ahler
structure on $K\times S^1$ is the standard product one. Denote it by $(\bar\varphi,\bar\xi,\bar\eta,\bar g)$,
so that
$$
\bar\xi=\partial_t,\qquad \bar\eta=dt, \quad \bar\varphi(X+a\partial_t)=JX
$$
for every $X\in\X(K)$ and $a\in\Cniesk(K\times S^1)$, where $J$ is the complex
structure on $K$.
Let $\bar U$ be the lift of $U$ to $K\times S^1$. Since $\pi$ is a local
isometry preserving the structure tensors, $\bar U$ is $\bar\varphi$-holomorphic
and $\operatorname{div}_{\bar g}\bar U=0$.
Let us denote
$$
\bar U=X+a\,\partial_t,
$$
where $X$ is tangent to $K$ and $a\in\Cniesk(K\times S^1)$.
\par Since $K\times S^1$ is co-K\"ahler, we have $\bar\nabla\bar\varphi=0$.
Hence, by the characterization of $\varphi$-holomorphic vector fields,
$$
\bar\nabla_{\bar\varphi Z}\bar U=\bar\varphi(\bar\nabla_Z\bar U)
$$
for every $Z\in\X(K\times S^1)$.
Take $Z=Y\in\X(K)$. Since the Levi-Civita connection of $\bar g$ is the product
connection, we obtain
$$
\bar\nabla_Y\bar U=\nabla^K_YX+Y(a)\partial_t,\quad \bar\nabla_{JY}\bar U=\nabla^K_{JY}X+JY(a)\partial_t.
$$
Therefore
$$
\nabla^K_{JY}X+JY(a)\partial_t=J(\nabla^K_YX).
$$
Comparing the horizontal and vertical parts, we get
$$
\nabla^K_{JY}X=J(\nabla^K_YX),
\qquad
JY(a)=0
$$
for every $Y\in\X(K)$. Since $J$ is an automorphism of $TK$, it follows that $Y(a)=0$
for every $Y\in\X(K)$, so $a$ depends only on the $S^1$-variable.
Now take $Z=\partial_t$.
Since $\bar\varphi(\partial_t)=0$,
the relation
$$
\bar\nabla_{\bar\varphi Z}\bar U=\bar\varphi(\bar\nabla_Z\bar U)
$$
gives
$$
0=\bar\varphi(\bar\nabla_{\partial_t}\bar U).
$$
But
$$
\bar\nabla_{\partial_t}\bar U=\bar\nabla_{\partial_t}X+\partial_t(a)\partial_t,
$$
hence $J(\bar\nabla_{\partial_t}X)=0$.
Thus $\bar\nabla_{\partial_t}X=0$.
So $X$ is independent of $t$.
\par The relation
$$
\nabla^K_{JY}X=J(\nabla^K_YX)
$$
is equivalent to $L_XJ=0$,
hence $X$ is holomorphic on the compact K\"ahler manifold $(K,J,g_K)$.
Moreover, since $\bar g=g_K+dt^2$, we have
$$
\operatorname{div}_{\bar g}\bar U=\operatorname{div}_{g_K}X+\partial_t(a).
$$
Because $X$ is independent of $t$ and $a$ depends only on $t$, the equality
$$
\operatorname{div}_{\bar g}\bar U=0
$$
implies that both $\operatorname{div}_{g_K}X$ and $\partial_t(a)$ are constant.
As $a$ is defined on $S^1$, we obtain $\partial_t(a)=0$ (smooth function on a circle cannot have a nonzero constant derivative),
and therefore $\operatorname{div}_{g_K}X=0$.
\par By the classical K\"ahlerian result, see \cite[Theorem 4.81]{Ba}, every
holomorphic divergence-free vector field on a compact K\"ahler manifold is
Killing. Hence $X$ is Killing on $K$. Since $a$ is constant and $\partial_t$ is
Killing on $K\times S^1$, the vector field
$$
\bar U=X+a\,\partial_t
$$
is Killing with respect to $\bar g$ ($L_{\bar U}\bar g=0$).
Finally, we conclude that
$L_Ug=0$ so $U$ is Killing.
\end{proof}

\begin{tw}\label{tw::radical}
Let $(M,\varphi,\xi,\eta,g,\eps)$ be a compact co-K\"ahler pseudo-metric
manifold of dimension $2n+1>5$ and index $2$, furnished with a timelike vector
field $E\in\ker\eta$ such that the distributions
$$
\Span\{E,\varphi E\}
\qquad\text{and}\qquad
H^{\perp_g}:=\Span\{E,\varphi E\}^{\perp_g}
$$
are integrable. If $U\in\XM$ is a $\varphi$-holomorphic vector field such that
$$
\operatorname{div}_gU=0,
$$
then the dimension of the radical of the symmetric bilinear form $L_Ug$ is at least
$$
2n-3.
$$
\end{tw}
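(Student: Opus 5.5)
Fix $\lambda:=g(E,E)<0$, which is nowhere zero. After multiplying $E$ by a suitable positive function, we can make $\lambda<-1$ everywhere; let me check this step. Multiplying $E$ by a function $f>0$ keeps it timelike and in $\ker\eta$, and leaves both distributions $\Span\{E,\varphi E\}$ and $H^{\perp_g}$ unchanged. Since $M$ is compact, $|\lambda|$ is bounded below by a positive constant, so a constant $c>0$ large enough gives $g(cE,cE)<-1$.

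With $F:=cE$ we form $g^\ast$ by Formula (\ref{eq::Gast}). By Proposition \ref{lem::indeks:gstar}, $g^\ast$ is nondegenerate with $\ind g^\ast=\ind g-2=0$, so $g^\ast$ is Riemannian. Moreover $(\varphi,\xi,\eta,g^\ast,\eps)$ is almost contact metric by Lemma \ref{lm::gast::general}. The idea is to compare $L_Ug$ with $L_Ug^\ast$ and to use the Riemannian Lemma \ref{lm::cok::killing} for $g^\ast$.

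The difficulty is that Lemma \ref{lm::cok::killing} requires $g^\ast$ to be co-K\"ahler and $\operatorname{div}_{g^\ast}U=0$, neither of which is automatic. So I would instead argue directly on the splitting. On $H^{\perp_g}$ the two metrics agree, $g=g^\ast$, and $g|_{H^{\perp_g}}$ is positive definite because the index $2$ is used up on $\Span\{E,\varphi E\}$. Since $H^{\perp_g}$ is integrable, $\varphi$-invariant and contains $\xi$, its leaves $N$ are $(2n-1)$-dimensional and inherit a Riemannian almost contact structure.

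I would then try to prove that $\Span\{E,\varphi E\}$ is also parallel, so that locally $M$ is a Riemannian-signature co-K\"ahler leaf times a negative definite surface, as in Theorem \ref{tw::localprod}. This would follow from $\widehat\nabla\varphi=0$ together with the integrability of both distributions, provided the second fundamental forms vanish, which still has to be established. Given the splitting, the alternative is to flip the sign on the $2$-plane by replacing $g$ with $g':=g|_{H^{\perp_g}}-g|_{\Span\{E,\varphi E\}}$. This $g'$ is Riemannian, co-K\"ahler (the Levi-Civita connection is unchanged for a local product), and has the same volume form up to sign, so $\operatorname{div}_{g'}U=\operatorname{div}_gU=0$. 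By Lemma \ref{lm::cok::killing}, $L_Ug'=0$.

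It then remains to relate $L_Ug$ to $L_Ug'$. They differ by $2L_U\bigl(g|_{\Span\{E,\varphi E\}}\bigr)=-2L_U(\alpha^2+\beta^2)/\lambda$-type terms. So $L_Ug=-2L_U(g_2)$, where $g_2:=-g|_{\Span\{E,\varphi E\}}$ is a rank-$2$ form built from $\alpha,\beta$. We have $L_U(\alpha\otimes\alpha)=L_U\alpha\otimes\alpha+\alpha\otimes L_U\alpha$, and similarly for $\beta$. Any such tensor vanishes on the subspace $\ker\alpha\cap\ker\beta\cap\ker L_U\alpha\cap\ker L_U\beta$, after rescaling $\alpha,\beta$ by $\lambda$ factors which only add $\alpha,\beta$ terms. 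This subspace has codimension at most $4$, hence dimension at least $2n+1-4=2n-3$, and it lies in the radical of $L_Ug$.

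The main obstacle is the splitting step: showing that integrability of both distributions and co-K\"ahlerity force the $g$-orthogonal decomposition to be parallel. If the second fundamental forms do not vanish, the fallback is to apply the same rank count directly to $g^\ast$ via Proposition \ref{stw::difference}.
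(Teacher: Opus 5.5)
There is a genuine gap. The theorem rests on one claim: some Riemannian metric is co-K\"ahler and has $U$ divergence-free, so that Lemma \ref{lm::cok::killing} applies. You never establish this.

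For $g^\ast$ built from $F=cE$ with $c$ constant, $\lambda=g(F,F)$ is not constant. Proposition \ref{stw::difference} then gives $\operatorname{div}_{g^\ast}U=U(\lambda)/(1+\lambda)$, which need not vanish. Condition (3) of Theorem \ref{lm::acokequiv} also shows $g^\ast$ is almost co-K\"ahler only if $X(\lambda)=0$ for all $X\in H^{\perp_g}$.

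For your sign-flipped $g'$, you justify co-K\"ahlerity by a $\widehat\nabla$-parallel splitting $\Span\{E,\varphi E\}\oplus H^{\perp_g}$. You do not prove this, and it does not follow from co-K\"ahlerity plus integrability of both distributions. The leaves of $\Span\{E,\varphi E\}$ are complex curves, hence minimal, but they need not be totally geodesic. Locally, take $g=g_N-\mu\,(dt\wedge ds)(\cdot,J\cdot)$, where $\mu$ is a constant and $J$ is a complex structure on the $(t,s)$-plane varying holomorphically along a complex coordinate of a Riemannian co-K\"ahler $N$ (e.g.\ $t+\tau(z)s$ holomorphic with $\tau$ nonconstant and $\operatorname{Im}\tau>0$). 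This is co-K\"ahler of index $2$, both distributions are integrable, and it is not a product. So that route cannot be completed as stated, and your fallback is only named, not carried out.

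The missing idea is one you nearly had. You noted that rescaling $E$ by a positive function preserves both distributions, so use a function to make $\lambda$ \emph{constant}. The paper takes $F:=2E/\sqrt{-g(E,E)}$, so $\lambda\equiv-4$.

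Then Proposition \ref{stw::difference} gives $\operatorname{div}_{g^\ast}U=\operatorname{div}_gU=0$. Since normality does not involve the metric, Theorem \ref{lm::acokequiv} reduces co-K\"ahlerity of $g^\ast$ to $g(X,\widehat\nabla_FF+\widehat\nabla_{\varphi F}\varphi F)=0$ for $X\in H^{\perp_g}$. Using $\widehat\nabla\varphi=0$ and $\eta(\widehat\nabla_FF)=0$, this expression equals $g(\varphi X,[F,\varphi F])$, which vanishes because $\Span\{F,\varphi F\}$ is integrable. Lemma \ref{lm::cok::killing} then gives $L_Ug^\ast=0$. Your rank count finishes the proof; it is equivalent to the paper's block-matrix argument, since $L_Ug=L_Ug^\ast$ on $H^{\perp_g}\times H^{\perp_g}$.

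Note also that $g^\ast=g|_{H^{\perp_g}}+(1+\lambda)\,g|_{\Span\{E,\varphi E\}}$, so your $g'$ is exactly $g^\ast$ for the normalization $\lambda\equiv-2$. Its co-K\"ahlerity can also be seen directly, with no parallelism. In coordinates adapted to the two integrable distributions, write $\Phi=\Phi_1+\Phi_2$, with $\Phi_1,\Phi_2$ the components on $H^{\perp_g}$ and on $\Span\{E,\varphi E\}$. Then $d\Phi=0$ splits by bidegree into $d\Phi_1=0$ and $d\Phi_2=0$, so the fundamental form $\Phi_1-\Phi_2$ of $g'$ is closed, and $g'$ has the same volume form as $g$. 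With either justification, the rest of your argument is correct.
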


\begin{proof}
Let
$$
F:=\frac{2}{\sqrt{-g(E,E)}}E.
$$
Then $F\in\ker\eta$, $g(F,F)=-4$ and
$\Span\{F,\varphi F\}=\Span\{E,\varphi E\}$, $\Span\{F,\varphi F\}^{\perp_g}=H^{\perp_g}$.
Consider the metric
$
g^\ast(X,Y)$ defined by Formula (\ref{eq::Gast}).
Since $g(F,F)=-4$, Proposition \ref{lem::indeks:gstar} implies that $\ind g^\ast=\ind g-2=0$ thus
$g^\ast$ is Riemannian.
\par We shall show that $(M,\varphi,\xi,\eta,g^\ast,\eps)$ is co-K\"ahler. Since $(\varphi,\xi,\eta)$ is normal Theorem \ref{lm::acokequiv} implies that
it is enough to verify the condition
$$
X(g(F,F))=g(X,\nabla_FF+\nabla_{\varphi F}\varphi F)
$$
for every $X\in\Gamma(H^{\perp_g})$.
The left-hand side vanishes, because $g(F,F)$ is constant. For the right-hand side,
using $\nabla\varphi=0$ and $\eta(\nabla_FF)=0$, we obtain
\begin{align*}
g(X,\nabla_FF+\nabla_{\varphi F}\varphi F)
=g(\varphi X,[F,\varphi F]).
\end{align*}
Since $\Span\{F,\varphi F\}$ is integrable, we have $[F,\varphi F]\in\Span\{F,\varphi F\}$ and in consequence $g(\varphi X,[F,\varphi F])=0$
for all $X\in\Gamma(H^{\perp_g})$. Thus $(M,\varphi,\xi,\eta,g^\ast,\eps)$ is co-K\"ahler.
\par Now, since $g(F,F)=-4$, Proposition \ref{stw::difference} implies
$$
\operatorname{div}_{g^\ast}U=\operatorname{div}_gU=0.
$$
Since $(M,g^\ast)$ is compact and Riemannian,
Lemma \ref{lm::cok::killing} implies that $U$ is Killing with respect to $g^\ast$,
that is, $L_Ug^\ast=0$.
\par Let
$$
\alpha:=g(F,\cdot),
\qquad
\beta:=g(\varphi F,\cdot).
$$
Take $X,Y\in\Gamma(H^{\perp_g})$. Then
$$
\alpha(X)=\alpha(Y)=\beta(X)=\beta(Y)=0.
$$
Moreover, since $H^{\perp_g}$ is integrable, $[X,Y]\in\Gamma(H^{\perp_g})$
and
$$
d\alpha(X,Y)=0,
\qquad
d\beta(X,Y)=0.
$$
Therefore, by \eqref{eq::difference},
$$
g^\ast(D(X,U),Y)=g^\ast(D(Y,U),X)=0.
$$
Since $g^\ast=g$ on $H^{\perp_g}$ and
$$
L_Ug^\ast(X,Y)=L_Ug(X,Y)+g^\ast(D(X,U),Y)+g^\ast(D(Y,U),X),
$$
we conclude that $(L_Ug)(X,Y)=0$ for all $X,Y\in\Gamma(H^{\perp_g})$.
Using again formula \eqref{eq::difference} we get
$$
2g^\ast\bigl(D(F,U),F\bigr)=\alpha(F)\,U\bigl(g(F,F)\bigr),
\quad
2g^\ast\bigl(D(\varphi F,U),\varphi F\bigr)
=\beta(\varphi F)\,U\bigl(g(\varphi F,\varphi F)\bigr).
$$
Now, since $g(F,F)$ is constant we obtain
$$
g^\ast(D(F,U),F)=0,
\qquad
g^\ast(D(\varphi F,U),\varphi F)=0.
$$
Hence
\begin{align*}
0=L_Ug^\ast(F,F)&=(L_Ug)(F,F)+2g^\ast(D(F,U),F),\\
0=L_Ug^\ast(\varphi F,\varphi F)&=(L_Ug)(\varphi F,\varphi F)
+2g^\ast(D(\varphi F,U),\varphi F),
\end{align*}
and thus
$$
(L_Ug)(F,F)=0,
\qquad
(L_Ug)(\varphi F,\varphi F)=0.
$$

Since $\ind g=2$ and
$$
g(F,F)=g(\varphi F,\varphi F)=-4,
\qquad
g(F,\varphi F)=0,
$$
the distribution $H^{\perp_g}$ is Riemannian of dimension $2n-1$. Let
$$
e_1,\dots,e_{2n-1}
$$
be a local $g$-orthonormal frame of $H^{\perp_g}$. Then the matrix of $L_Ug$
with respect to the basis
$$
\Bigl\{e_1,\dots,e_{2n-1},\frac12F,\frac12\varphi F\Bigr\}
$$
has the form
$$
\begin{pmatrix}
0 & \cdots & 0 & \star & \star\\
\vdots & \ddots & \vdots & \vdots & \vdots\\
0 & \cdots & 0 & \star & \star\\
\star & \cdots & \star & 0 & \star\\
\star & \cdots & \star & \star & 0
\end{pmatrix}.
$$
Hence
$$
\operatorname{rank}(L_Ug)\le 4.
$$
Therefore
$$
\dim\Rad(L_Ug)\ge (2n+1)-4=2n-3.
$$
\end{proof}

\begin{tw}\label{tw::phisectional}
Let $(M,\varphi,\xi,\eta,g,\eps)$ be a co-K\"ahler pseudo-metric manifold and
let $F\in\ker\eta$ be a vector field such that
$$
g(F_p,F_p)\neq -1
$$
for all $p\in M$.
Let
$
g^\ast$ be defined by Formula (\ref{eq::Gast}).
If $\Pi\subset\ker\eta$ is a nondegenerate $\varphi$-invariant plane orthogonal
to $F$, then its sectional curvatures $K^\ast(\Pi)$ and $K(\Pi)$ with respect
to $g^\ast$ and $g$ are related by
\begin{align*}
K^\ast(\Pi)=K(\Pi)
&-\frac{3\lambda}{4(1+\lambda)}
\Bigl((\operatorname{div}_\Pi F)^2+d\alpha(X,\varphi X)^2\Bigr)\\
&-\frac{1}{1+\lambda}\Bigl(
g(\nabla_XF,X)^2+g(\nabla_{\varphi X}F,\varphi X)^2\\
&\qquad\qquad\qquad
+g(\nabla_XF,\varphi X)^2+g(\nabla_{\varphi X}F,X)^2
\Bigr).
\end{align*}
where $\lambda:=g(F,F)$, $X\in\Pi$ is such that
$g(X,X)=\pm1$, $\Pi=\Span\{X,\varphi X\}$
and
$$
\operatorname{div}_\Pi F:=g(\nabla_XF,X)+g(\nabla_{\varphi X}F,\varphi X).
$$
In particular, if $g(F,F)\geq 0$, then $K^\ast(\Pi)\leq K(\Pi)$.
\end{tw}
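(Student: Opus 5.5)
The natural tool is the Gauss-type formula for the change of curvature under a change of Levi-Civita connection. With $D=\widehat\nabla^\ast-\widehat\nabla$ from Proposition \ref{stw::difference}, one has
\[
R^\ast(U,V)W=R(U,V)W+(\widehat\nabla_UD)(V,W)-(\widehat\nabla_VD)(U,W)+D(U,D(V,W))-D(V,D(U,W)).
\]
Take $X\in\Pi$ with $g(X,X)=\pm1$. Since $\Pi\perp F$ and $\Pi$ is $\varphi$-invariant, both $X$ and $\varphi X$ lie in $H^{\perp_g}$, where $g^\ast=g$. Hence the normalising denominators agree, $g^\ast(X,X)g^\ast(\varphi X,\varphi X)-g^\ast(X,\varphi X)^2=g(X,X)^2=1$, and so
\[
K^\ast(\Pi)-K(\Pi)=g^\ast(R^\ast(X,\varphi X)\varphi X,X)-g(R(X,\varphi X)\varphi X,X).
\]
Note that $g(X,\varphi X)=0$ because of the skew symmetry of $\Phi$. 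The plan is to compute the right-hand side using \eqref{eq::difference}.

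\textbf{Step 1.} Compute $D$ on arguments from $H^{\perp_g}$. Since $\alpha$ and $\beta$ vanish on $X$ and $\varphi X$, formula \eqref{eq::difference} gives
\[
2g^\ast(D(U,V),W)=\alpha(W)(L_Fg)(U,V)+\beta(W)(L_{\varphi F}g)(U,V)
\]
for $U,V\in\{X,\varphi X\}$. Thus $D(U,V)$ lies in $H=\Span\{F,\varphi F\}$, and since $g^\ast(F,F)=g^\ast(\varphi F,\varphi F)=\lambda(1+\lambda)$,
\[
D(U,V)=\frac{1}{2(1+\lambda)}\Bigl((L_Fg)(U,V)\,F+(L_{\varphi F}g)(U,V)\,\varphi F\Bigr).
\]
Next I would rewrite $L_{\varphi F}g$ in terms of $\nabla F$ using the co-K\"ahler condition $\widehat\nabla\varphi=0$ (Remark \ref{uw::cok::char}) together with the skew symmetry of $\varphi$. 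For instance,
\[
(L_{\varphi F}g)(X,X)=2g(\varphi\nabla_XF,X)=-2g(\nabla_XF,\varphi X).
\]
This produces exactly the four quantities $g(\nabla_XF,X)$, $g(\nabla_{\varphi X}F,\varphi X)$, $g(\nabla_XF,\varphi X)$, $g(\nabla_{\varphi X}F,X)$. The antisymmetric combination $g(\nabla_XF,\varphi X)-g(\nabla_{\varphi X}F,X)$ equals $2d\alpha(X,\varphi X)$, up to the convention for $d$. The symmetric combination gives $\operatorname{div}_\Pi F$.

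\textbf{Step 2.} Handle the quadratic terms. The quantities $g^\ast(D(X,D(\varphi X,\varphi X)),X)$ and $g^\ast(D(\varphi X,D(X,\varphi X)),X)$ require $D(U,F)$ and $D(U,\varphi F)$ paired with $X$. For these I would use the full formula \eqref{eq::difference} with $W=X$: there the $d\alpha(\cdot,X)$ and $d\beta(\cdot,X)$ terms survive, multiplied by $\alpha(F)=\lambda$. These are again expressible through $\nabla F$, $\nabla\varphi F=\varphi\nabla F$ and their transposes.

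\textbf{Step 3.} Handle the derivative terms. Write
\[
g^\ast((\widehat\nabla_XD)(\varphi X,\varphi X),X)=Xg^\ast(D(\varphi X,\varphi X),X)-\dots
\]
The first term vanishes because $D(\varphi X,\varphi X)\in H$ is $g^\ast$-orthogonal to $X$. The remaining terms involve $D(\widehat\nabla_X\varphi X,\varphi X)$ and similar expressions, together with $(\widehat\nabla_X g^\ast)$-type corrections; the latter arise because $g^\ast$ is not $\widehat\nabla$-parallel, since $\widehat\nabla g^\ast$ involves $\nabla\alpha$ and $\nabla\beta$. The cleanest route is to use the identity
\[
g^\ast(R^\ast(X,Y)Y,X)=g(R(X,Y)Y,X)+\text{(Koszul-type terms)},
\]
or equivalently to work in the $g^\ast$-orthonormal frame $e_i,\ F/\sqrt{|\lambda(1+\lambda)|},\ \varphi F/\sqrt{|\lambda(1+\lambda)|}$. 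One expects the derivative terms to collapse into the same quadratic expressions in $\nabla F$ after using $g(F,X)=0$, so that $g(\nabla_X F, \cdot)$ pairs with $-g(F,\nabla_X\cdot)$, much as in Corollary \ref{wn::secondfund} and Remark \ref{uw::secondfund}.

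\textbf{Step 4.} Collect all terms, separating the symmetric part of $\nabla F|_\Pi$ (the divergence part and the $L_Fg$ entries) from the antisymmetric part ($d\alpha$). This should yield the coefficients $-\frac{3\lambda}{4(1+\lambda)}$ and $-\frac{1}{1+\lambda}$ stated in the theorem. The final claim then follows at once: if $\lambda\ge0$, both coefficients are nonpositive and multiply sums of squares, so $K^\ast(\Pi)\le K(\Pi)$.

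I expect Step 3 and the bookkeeping in Step 4 to be the main obstacle. The derivative terms and the non-parallelism of $g^\ast$ must be tracked carefully to obtain exactly the stated coefficients, in particular the factor $3/4$, which resembles the O'Neill-type $\tfrac34|[X,Y]^{v}|^2$ term. My first attempt would be to verify that the combination matches an O'Neill/Gauss formula for the leaves of $H^{\perp_g}$, using the second fundamental forms from Corollary \ref{wn::secondfund} in place of a direct expansion.
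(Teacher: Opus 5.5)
Your framework is the paper's. It uses the curvature-difference identity for $D=\widehat\nabla^\ast-\widehat\nabla$ and your Step 1 formula for $D$ on $H^{\perp_g}$; that formula is better justified via $g^\ast(F,\cdot)=(1+\lambda)\alpha$ and $g^\ast(\varphi F,\cdot)=(1+\lambda)\beta$, so you never divide by $\lambda$. Step 2 correctly gives $g(D(X,D(Y,Y)),X)=0$ and $g(D(Y,D(X,Y)),X)=\tfrac{\lambda}{4(1+\lambda)}(c^2-b^2+d^2-a^2)$. Here $Y=\varphi X$ and $a,b,c,d$ are $g(\widehat\nabla_XF,X)$, $g(\widehat\nabla_XF,Y)$, $g(\widehat\nabla_YF,X)$, $g(\widehat\nabla_YF,Y)$.

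The gap is Steps 3--4. They carry the entire content of the formula, yet you only say what they \emph{should} yield, and the tools you suggest are not the right ones. There are no $\widehat\nabla g^\ast$ corrections to track. Since $X\in H^{\perp_g}$, we have $g^\ast(\cdot,X)=g(\cdot,X)$, so pair with $g$ and use $\widehat\nabla g=0$. Extending $X,Y$ as sections of $H^{\perp_g}$ gives $g((\widehat\nabla_XD)(Y,Y),X)=-g(D(Y,Y),\widehat\nabla_XX)-2g(D(\widehat\nabla_XY,Y),X)$, and similarly for $(\widehat\nabla_YD)(X,Y)$. The missing ingredient is the identity, read off from \eqref{eq::difference},
\[
2g(D(W,Y),X)=\alpha(W)\,d\alpha(Y,X)+\beta(W)\,d\beta(Y,X)\qquad (X,Y\in H^{\perp_g},\ W \text{ arbitrary}).
\]
Use it with $\alpha(\widehat\nabla_XX)=-a$, $\beta(\widehat\nabla_XX)=b$, $\alpha(\widehat\nabla_XY)=-b$, $\beta(\widehat\nabla_XY)=-a$, etc.\ (these follow from $\alpha,\beta|_{H^{\perp_g}}=0$ and $\widehat\nabla\varphi=0$), together with $d\alpha(Y,X)=c-b$ and $d\beta(Y,X)=-(a+d)$. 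This produces the paper's values
\begin{align*}
g((\widehat\nabla_XD)(Y,Y),X)&=-a^2-b^2+\tfrac{\lambda}{1+\lambda}(bc-ad),\\
g((\widehat\nabla_YD)(X,Y),X)&=c^2+d^2+\tfrac{\lambda}{2(1+\lambda)}(ad-bc-c^2-d^2),
\end{align*}
and summing the four terms gives exactly the stated coefficients. The convention for $d$ matters here. With the paper's $d\alpha(U,V)=U\alpha(V)-V\alpha(U)-\alpha([U,V])$ one has $d\alpha(X,\varphi X)=b-c$ exactly. Your ``$2d\alpha$'' would multiply the $d\alpha^2$ coefficient by $4$.

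Your proposed first attempt, via Corollary~\ref{wn::secondfund} and the Gauss equation, cannot work. That corollary presupposes that $H^{\perp_g}$ is integrable, which the theorem does not assume. More to the point, integrability forces $d\alpha=d\beta=0$ on $H^{\perp_g}$. In the co-K\"ahler case $d\beta(X,\varphi X)=a+d=\operatorname{div}_\Pi F$, so the term with coefficient $\tfrac{3\lambda}{4(1+\lambda)}$ is precisely $\tfrac{3\lambda}{4(1+\lambda)}\bigl(d\alpha(X,\varphi X)^2+d\beta(X,\varphi X)^2\bigr)$. This is the integrability obstruction of $H^{\perp_g}$ on $\Pi$, and it vanishes identically whenever the Gauss route is available. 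That route only recovers $K^\ast(\Pi)-K(\Pi)=-\tfrac{2(a^2+b^2)}{1+\lambda}$ in the integrable case, which is a useful consistency check but never produces the $3/4$ coefficient. Your O'Neill intuition is correct, but it has to be realized through the direct expansion above.
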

\begin{proof}
Choose $X\in\Pi$ such that $g(X,X)=\pm 1$
and set $Y:=\varphi X$.
Since $\Pi\subset\ker\eta$ is $\varphi$-invariant, we have
$$
\Pi=\Span\{X,Y\},
\qquad
g(Y,Y)=g(X,X),
\qquad
g(X,Y)=0.
$$
Since $\Pi$ is orthogonal to $F$ we easily get
$$
g(X,F)=g(Y,F)=g(X,\varphi F)=g(Y,\varphi F)=0.
$$
In consequence $g^\ast=g$ on $\Pi$. Now we have,
\begin{equation}\label{eq::phisectional::1}
K^\ast(\Pi)-K(\Pi)
=
g\bigl((\widehat{R}^\ast_{XY}Y-\widehat{R}_{XY}Y),X\bigr).
\end{equation}
The following general formula holds:
$$
\widehat{R}^\ast_{UV}W
=
\widehat{R}_{UV}W
+(\widehat{\nabla}_UD)(V,W)-(\widehat{\nabla}_VD)(U,W)
+D(U,D(V,W))-D(V,D(U,W)).
$$
Now \eqref{eq::phisectional::1} takes the form:
\begin{align*}\label{eq::phisectional::1b}
K^\ast(\Pi)-K(\Pi)
&=
g\bigl((\widehat{\nabla}_XD)(Y,Y),X\bigr)
-g\bigl((\widehat{\nabla}_YD)(X,Y),X\bigr)\\
\nonumber &\quad
+g\bigl(D(X,D(Y,Y)),X\bigr)
-g\bigl(D(Y,D(X,Y)),X\bigr).
\end{align*}
Let us denote
\begin{align*}
\lambda&:=g(F,F),\quad \alpha:=g(F,\cdot),\quad \beta:=g(\varphi F,\cdot),\\
a&:=g(\widehat{\nabla}_XF,X),\quad b:=g(\widehat{\nabla}_XF,Y),\quad
c:=g(\widehat{\nabla}_YF,X),\quad d:=g(\widehat{\nabla}_YF,Y).
\end{align*}
Then
$
\operatorname{div}_\Pi F=a+d$ and $ d\alpha(X,Y)=b-c.
$
Then
Proposition \ref{stw::difference} implies that $D(X,X)$, $D(Y,Y)$ and $D(X,Y)$
are sections of $\Span\{F,\varphi F\}$.
We have
\begin{align*}
g\bigl((\widehat{\nabla}_XD)(Y,Y),X\bigr)
&=
-g^\ast\bigl(D(Y,Y),\widehat{\nabla}_XX\bigr)
+g\bigl(D(Y,Y),F\bigr)g\bigl(\widehat{\nabla}_XX,F\bigr)\\
&\quad
+g\bigl(D(Y,Y),\varphi F\bigr)g\bigl(\widehat{\nabla}_XX,\varphi F\bigr)
-2g^\ast\bigl(D(\widehat{\nabla}_XY,Y),X\bigr),
\end{align*}

$$
g\bigl((\widehat{\nabla}_XD)(Y,Y),X\bigr)
=
-a^2-b^2+\frac{\lambda}{1+\lambda}(bc-ad),
$$
and
$$
g\bigl((\widehat{\nabla}_YD)(X,Y),X\bigr)
=
c^2+d^2+\frac{\lambda}{2(1+\lambda)}(ad-bc-c^2-d^2).
$$
Moreover, 
$
g\bigl(D(X,D(Y,Y)),X\bigr)=0
$
and
\begin{equation}\label{eq::phisectional::7}
g\bigl(D(Y,D(X,Y)),X\bigr)
=
\frac{\lambda}{4(1+\lambda)}(c^2-b^2+d^2-a^2).
\end{equation}
Taking all above together
 we obtain
$$
K^\ast(\Pi)-K(\Pi)
=
-\frac{3\lambda}{4(1+\lambda)}
\Bigl((\operatorname{div}_\Pi F)^2+d\alpha(X,Y)^2\Bigr)
-\frac{1}{1+\lambda}\Bigl(
a^2+b^2+c^2+d^2
\Bigr)
$$
and finally
\begin{align*}
K^\ast(\Pi)&-K(\Pi)= -\frac{3\lambda}{4(1+\lambda)}
\Bigl((\operatorname{div}_\Pi F)^2+d\alpha(X,\varphi X)^2\Bigr)\\
&-\frac{1}{1+\lambda}\Bigl(
g(\widehat{\nabla}_XF,X)^2+g(\widehat{\nabla}_{\varphi X}F,\varphi X)^2+g(\widehat{\nabla}_XF,\varphi X)^2+g(\widehat{\nabla}_{\varphi X}F,X)^2
\Bigr).
\end{align*}
\par If $g(F,F)=\lambda\geq 0$, then $\lambda+1>0$ and
$$
K^\ast(\Pi)\leq K(\Pi).
$$
\end{proof}
By direct computations using the above theorem we obtain the following corollary.
\begin{wn}\label{wn::phisectional}
Let $(M,\varphi,\xi,\eta,g,\eps)$ be a co-K\"ahler pseudo-metric manifold and
let $F\in\ker\eta$ be a vector field such that
$$
g(F_p,F_p)\neq -1
$$
for all $p\in M$. Suppose that $\Pi\subset\ker\eta$ is a nondegenerate
$\varphi$-invariant plane at a point $p\in M$ orthogonal to $F_p$. If $F$ is
Killing or $\varphi$-holomorphic and
$$
g(F_p,F_p)=-\frac{2}{3},
$$
then the sectional curvatures of $\Pi$ computed with respect to $g$ and $g^\ast$
coincide, that is,
$$
K^\ast(\Pi)=K(\Pi).
$$
\end{wn}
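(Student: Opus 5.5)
The plan is to specialize the formula of Theorem \ref{tw::phisectional} at the point $p$. The idea is that each of the two hypotheses (Killing, or $\varphi$-holomorphic) forces linear relations among the four quantities
$$
a=g(\widehat{\nabla}_XF,X),\quad b=g(\widehat{\nabla}_XF,\varphi X),\quad c=g(\widehat{\nabla}_{\varphi X}F,X),\quad d=g(\widehat{\nabla}_{\varphi X}F,\varphi X).
$$
Here $X\in\Pi$ is chosen with $g(X,X)=\pm1$ and $\Pi=\Span\{X,\varphi X\}$, exactly as in that theorem. In both cases the right-hand side should collapse to a multiple of $(3\lambda+2)/(1+\lambda)$.

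In the notation of the proof of Theorem \ref{tw::phisectional} we have $\operatorname{div}_\Pi F=a+d$ and $d\alpha(X,\varphi X)=b-c$. Hence
$$
K^\ast(\Pi)-K(\Pi)=-\frac{3\lambda}{4(1+\lambda)}\bigl((a+d)^2+(b-c)^2\bigr)-\frac{1}{1+\lambda}\bigl(a^2+b^2+c^2+d^2\bigr).
$$

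\emph{Killing case.} Since $L_Fg=0$, the endomorphism $\widehat{\nabla}F$ is $g$-skew. This gives $a=d=0$ and $c=-b$, so $\operatorname{div}_\Pi F=0$ and $d\alpha(X,\varphi X)=2b$. Substituting, the difference becomes
$$
-\frac{3\lambda b^2}{1+\lambda}-\frac{2b^2}{1+\lambda}=-\frac{(3\lambda+2)b^2}{1+\lambda}.
$$

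\emph{$\varphi$-holomorphic case.} By Remark \ref{uw::phihol::cok}, the co-K\"ahler condition turns $L_F\varphi=0$ into $\widehat{\nabla}_{\varphi X}F=\varphi\widehat{\nabla}_XF$. Using the antisymmetry $g(\varphi U,V)=-g(U,\varphi V)$ we get $c=g(\varphi\widehat{\nabla}_XF,X)=-b$. Using $g(\varphi U,\varphi V)=g(U,V)-\eps\eta(U)\eta(V)$ together with $\eta(X)=0$ we get $d=a$. Then $\operatorname{div}_\Pi F=2a$ and $d\alpha(X,\varphi X)=2b$, and the difference equals
$$
-\frac{3\lambda(a^2+b^2)}{1+\lambda}-\frac{2(a^2+b^2)}{1+\lambda}=-\frac{(3\lambda+2)(a^2+b^2)}{1+\lambda}.
$$

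In both cases the factor $3\lambda+2$ vanishes when $\lambda=g(F_p,F_p)=-\tfrac23$. This value is different from $-1$, so the denominators are fine and Theorem \ref{tw::phisectional} applies. It follows that $K^\ast(\Pi)=K(\Pi)$.

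The formula of Theorem \ref{tw::phisectional} is pointwise in the first derivatives of $F$ at $p$, so only the value of $\lambda$ at $p$ matters. The only step that needs care is checking the identities $c=-b$ and $d=a$ in the holomorphic case. This requires $X\in\ker\eta$, so that no $\eps\eta\otimes\eta$ term enters. I would also double-check the sign convention of $d\alpha$ used in the theorem, although the result only involves $(b-c)^2$, so the sign does not affect the conclusion.
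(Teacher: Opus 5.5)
Your proposal is correct and is exactly the ``direct computation'' the paper refers to. You specialize Theorem~\ref{tw::phisectional}: the Killing condition gives $a=d=0$, $c=-b$, and $\varphi$-holomorphicity (via $\widehat{\nabla}\varphi=0$) gives $d=a$, $c=-b$, so in both cases $K^\ast(\Pi)-K(\Pi)$ is a multiple of $(3\lambda+2)/(1+\lambda)$, which vanishes at $\lambda=-\tfrac23$; the paper states only that the corollary follows by direct computation from that theorem, and you have written out those omitted steps correctly.
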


\section{Statistical structures}
 Statistical structures in contact-
type geometry have been investigated in \cite{AM},\cite{FHOS}, \cite{YKM}. Our aim in this section is to study the canonical variation of almost contact metric structures in this framework.
\begin{df}\label{df::compat}
Let $(M,\varphi,\xi,\eta,g,\eps)$ be an almost contact pseudo-metric manifold
and let $\nabla^s$ be a connection on $M$. We say that $\nabla^s$ is
\emph{compatible with the almost contact pseudo-metric structure}
$(\varphi,\xi,\eta,g,\eps)$ if
$$
\nabla^s\Phi=0 \qquad \text{and} \qquad \nabla^s\eta=0,
$$
where $\Phi(X,Y)=g(X,\varphi Y)$.
\end{df}

\begin{uw}\label{uw::LC::compat}
Let $(M,\varphi,\xi,\eta,g,\eps)$ be an almost contact pseudo-metric manifold
and let $\widehat{\nabla}$ denote the Levi-Civita connection of $g$. Since $\widehat{\nabla}g=0$,
we have $(\widehat{\nabla}_X\Phi)(Y,Z)=g\bigl(Y,(\widehat{\nabla}_X\varphi)(Z)\bigr)$
for all $X,Y,Z\in\XM$. Hence
$$
\widehat{\nabla}\Phi=0 \iff \widehat{\nabla}\varphi=0.
$$
Moreover, since $g(X,\xi)=\eps\,\eta(X)$, it follows that $(\widehat{\nabla}_X\eta)(Y)=\eps\,g(Y,\widehat{\nabla}_X\xi)$,
and therefore
$$
\widehat{\nabla}\eta=0\iff\widehat{\nabla}\xi=0.
$$
Consequently, the Levi-Civita connection $\widehat{\nabla}$ is compatible with
$(\varphi,\xi,\eta,g,\eps)$ in the sense of Definition \ref{df::compat} if and
only if
$$
\widehat{\nabla}\varphi=0\qquad\text{and}\qquad\widehat{\nabla}\xi=0.
$$
In particular, on a co-K\"ahler pseudo-metric manifold, $\widehat{\nabla}$ is
always compatible.
\end{uw}

\begin{df}\label{df::acstat}
Let $(M,\varphi,\xi,\eta,g,\eps)$ be an almost contact pseudo-metric manifold
and let $\nabla^s$ be a torsion-free connection on $M$. We say that
$(\nabla^s,\varphi,\xi,\eta,g,\eps)$ is an
\emph{almost contact pseudo-metric statistical structure} if
$\nabla^s$ is statistical with respect to $g$ and compatible with
$(\varphi,\xi,\eta,g,\eps)$ in the sense of Definition \ref{df::compat}.
\end{df}
Standard computations adapted to almost contact structures give the following lemma.
\begin{lm}\label{lm::dual}
Let $(M,\varphi,\xi,\eta,g,\eps)$ be an almost contact pseudo-metric manifold
and let $\nabla^s$ be a connection compatible with
$(\varphi,\xi,\eta,g,\eps)$.
Let $\nabla^d$ be the dual connection of $\nabla^s$ with respect to $g$. Then:
\begin{enumerate}
\item
\begin{equation}\label{eq::dual::formula}
\nabla^d_XY=-\varphi\nabla^s_X\varphi Y+X(\eta(Y))\xi=\nabla^s_XY-\varphi\bigl((\nabla^s_X\varphi)(Y)\bigr)
\end{equation}
for all $X,Y\in\XM$. In particular,
\begin{equation}\label{eq::dual::phi}
\varphi\nabla^d_XY=\nabla^s_X\varphi Y,
\qquad
\varphi\nabla^s_XY=\nabla^d_X\varphi Y,
\qquad
\nabla^s_X\xi=\nabla^d_X\xi=0.
\end{equation}
\item
$\nabla^d$ is also compatible with $(\varphi,\xi,\eta,g,\eps)$.
\item
If $T^s$ and $T^d$ denote the torsion tensors of $\nabla^s$ and $\nabla^d$,
respectively, then
\begin{equation}\label{eq::dual::torsion}
T^d(X,Y)-T^s(X,Y)=\varphi\Bigl((\nabla^s_Y\varphi)(X)-(\nabla^s_X\varphi)(Y)\Bigr)
\end{equation}
for all $X,Y\in\XM$.
In particular, if $\nabla^s$ is torsion-free and compatible with
$(\varphi,\xi,\eta,g,\eps)$, then $\nabla^s$ is statistical if and only if
$\nabla^s\varphi$ is symmetric.
\end{enumerate}
\end{lm}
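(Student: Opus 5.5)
The plan is to start from the defining identity of the dual connection, $X g(Y,Z)=g(\nabla^s_XY,Z)+g(Y,\nabla^d_XZ)$, and to convert the two compatibility conditions into statements about $\nabla^s\varphi$ and $\nabla^s\xi$.

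First, $\nabla^s\eta=0$ gives $\eta(\nabla^s_X\xi)=X(\eta(\xi))=0$. Applying $\nabla^s$ to $\Phi(Y,\xi)=g(Y,\varphi\xi)=0$ and using $\nabla^s\Phi=0$ gives $\Phi(Y,\nabla^s_X\xi)=0$ for all $Y$. Hence $\varphi\nabla^s_X\xi=0$, so $\nabla^s_X\xi$ is a multiple of $\xi$. Combined with $\eta(\nabla^s_X\xi)=0$, this yields $\nabla^s\xi=0$.

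Next I would express $g$ through $\Phi$ and $\eta$. From $g(\varphi X,\varphi Y)=g(X,Y)-\eps\eta(X)\eta(Y)$ we get
\[
g(Y,Z)=\Phi(\varphi Y,Z)\cdot(\pm1)+\eps\eta(Y)\eta(Z);
\]
more precisely $\Phi(\varphi Y,Z)=g(\varphi Y,\varphi Z)$, so $g(Y,Z)=\Phi(\varphi Y,Z)+\eps\eta(Y)\eta(Z)$. Differentiate this identity along $X$ using $\nabla^s\Phi=0$ and $\nabla^s\eta=0$:
\[
Xg(Y,Z)=\Phi(\nabla^s_X(\varphi Y),Z)+\Phi(\varphi Y,\nabla^s_XZ)+\eps X(\eta(Y))\eta(Z)+\eps\eta(Y)X(\eta(Z)).
\]
Subtract $g(Y,\nabla^s_XZ)$ and solve for $g(Y,\nabla^d_XZ)$, rewriting $\Phi(W,Z)=g(W,\varphi Z)=-g(\varphi W,Z)$ where $\varphi$ is skew. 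This should identify $\nabla^d_XZ=-\varphi\nabla^s_X\varphi Z+X(\eta(Z))\xi$, after using $\varphi^2=-\mathrm{Id}+\eta\otimes\xi$ and $\eta(\nabla^s_XW)=X\eta(W)$.

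The equality with $\nabla^s_XY-\varphi((\nabla^s_X\varphi)Y)$ then follows by expanding $\nabla^s_X\varphi Y=(\nabla^s_X\varphi)Y+\varphi\nabla^s_XY$ and applying $\varphi^2$ together with $\eta(\nabla^s_XY)=X(\eta(Y))$. The main obstacle is this bookkeeping with signs and the $\eta$-terms, so I would verify it separately on $\ker\eta$ and on $\xi$.

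For the identities in \eqref{eq::dual::phi}, apply $\varphi$ to \eqref{eq::dual::formula}, use $\varphi^2$ and $\eta(\nabla^s_X\varphi Y)=0$, and note that $\varphi\xi=0$. Replacing $Y$ by $\varphi Y$ gives the second identity. The claim $\nabla^d\xi=0$ follows from $\varphi\xi=0$ and $\eta(\xi)=1$.

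For part (2), $\nabla^d$ is compatible with the structure:
\begin{itemize}
\item $\nabla^d\eta=0$ follows because $\eta(\nabla^d_XY)=X\eta(Y)$ directly from the formula.
\item $\nabla^d\Phi=0$ follows from the duality identity combined with $\varphi\nabla^d=\nabla^s\varphi$. Indeed, $X\Phi(Y,Z)=Xg(Y,\varphi Z)=g(\nabla^s_XY,\varphi Z)+g(Y,\nabla^d_X\varphi Z)=g(\nabla^s_XY,\varphi Z)+g(Y,\varphi\nabla^s_XZ)$. This shows $\nabla^s\Phi=0$ once more. To reach $\nabla^d\Phi=0$, use the symmetric roles: since the dual of $\nabla^d$ is $\nabla^s$, write $X\Phi(Y,Z)=g(\nabla^d_XY,\varphi Z)+g(Y,\nabla^s_X\varphi Z)=g(\nabla^d_XY,\varphi Z)+g(Y,\varphi\nabla^d_XZ)$.
\end{itemize}

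For part (3), take the antisymmetrization of the second expression in \eqref{eq::dual::formula}. For the final claim, recall that a torsion-free $\nabla^s$ is statistical if and only if $\nabla^d$ is torsion-free. By \eqref{eq::dual::torsion}, this holds if and only if $\varphi((\nabla^s_Y\varphi)X-(\nabla^s_X\varphi)Y)=0$. Since $\eta((\nabla^s_X\varphi)Y)=X\eta(\varphi Y)-\eta(\varphi\nabla^s_XY)=0$, the vector $(\nabla^s_Y\varphi)X-(\nabla^s_X\varphi)Y$ lies in $\ker\eta$, where $\varphi$ is injective. Hence the condition is equivalent to symmetry of $\nabla^s\varphi$.
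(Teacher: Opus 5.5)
Your proposal is correct and is the direct verification the paper has in mind; the paper gives no details beyond ``standard computations''. Each step checks out:
\begin{enumerate}
\item You get $\nabla^s\xi=0$ by differentiating $\Phi(\cdot,\xi)=0$.
\item You differentiate $g(Y,Z)=\Phi(\varphi Y,Z)+\varepsilon\eta(Y)\eta(Z)$ to obtain the formula for $\nabla^d$.
\item You use the symmetric form of the duality relation to get $\nabla^d\Phi=0$.
\item You use injectivity of $\varphi$ on $\ker\eta$ for the statistical criterion.
\end{enumerate}

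The only slip is in labeling. Subtracting $g(Y,\nabla^s_XZ)$ from your differentiated identity isolates $g(\nabla^d_XY,Z)$, via $Xg(Y,Z)=g(\nabla^d_XY,Z)+g(Y,\nabla^s_XZ)$, not $g(Y,\nabla^d_XZ)$. This yields $\nabla^d_XY=-\varphi\nabla^s_X\varphi Y+X(\eta(Y))\xi$ directly.
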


\begin{lm}\label{lm::acstat}
Let $(M,\varphi,\xi,\eta,g,\eps)$ be an almost contact pseudo-metric manifold.
If there exists an almost contact pseudo-metric statistical structure
$(\nabla^s,\varphi,\xi,\eta,g,\eps)$ on $M$, then
$(M,\varphi,\xi,\eta,g,\eps)$ is a co-K\"ahler pseudo-metric manifold.
\end{lm}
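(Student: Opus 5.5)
The plan is to verify the three defining conditions of a co-K\"ahler pseudo-metric manifold directly: $d\eta=0$, $d\Phi=0$ and normality. Throughout we use that $\nabla^s$ is torsion-free and compatible with $(\varphi,\xi,\eta,g,\eps)$.

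\emph{Step 1: $M$ is almost co-K\"ahler.} For a torsion-free connection, the exterior derivative of any form is the alternation of its covariant derivative. In particular,
$$
2\,d\eta(X,Y)=(\nabla^s_X\eta)(Y)-(\nabla^s_Y\eta)(X),
$$
with the same normalisation of $d$ as the paper uses. Hence $\nabla^s\eta=0$ gives $d\eta=0$ at once. In the same way, $d\Phi$ is a constant multiple of the cyclic sum $\mathfrak S_{X,Y,Z}(\nabla^s_X\Phi)(Y,Z)$, so $\nabla^s\Phi=0$ gives $d\Phi=0$. Thus $(M,\varphi,\xi,\eta,g,\eps)$ is almost co-K\"ahler, and it only remains to show normality. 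Since $d\eta=0$, normality is equivalent to $[\varphi,\varphi]=0$.

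\emph{Step 2: two identities for $\nabla^s\varphi$.} First, because $\nabla^s$ is torsion-free, compatible and statistical, Lemma \ref{lm::dual}(3) gives that $\nabla^s\varphi$ is symmetric:
$$
(\nabla^s_X\varphi)(Y)=(\nabla^s_Y\varphi)(X).
$$
Second, Lemma \ref{lm::dual}(1) gives $\nabla^s\xi=0$, and we also have $\nabla^s\eta=0$. Differentiating $\varphi^2=-I+\eta\otimes\xi$ along $X$ therefore yields the anticommutation relation
$$
(\nabla^s_X\varphi)\varphi+\varphi(\nabla^s_X\varphi)=0.
$$

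\emph{Step 3: vanishing of the Nijenhuis tensor.} We write $[\varphi,\varphi]$ using the torsion-free connection $\nabla^s$:
$$
[\varphi,\varphi](X,Y)=(\nabla^s_{\varphi X}\varphi)Y-(\nabla^s_{\varphi Y}\varphi)X-\varphi\bigl((\nabla^s_X\varphi)Y-(\nabla^s_Y\varphi)X\bigr).
$$
The bracketed term vanishes by the symmetry from Step 2. For the remaining terms, symmetry gives
$$
(\nabla^s_{\varphi X}\varphi)Y-(\nabla^s_{\varphi Y}\varphi)X=(\nabla^s_Y\varphi)(\varphi X)-(\nabla^s_X\varphi)(\varphi Y).
$$
Applying the anticommutation relation to each term turns this into
$$
-\varphi(\nabla^s_Y\varphi)X+\varphi(\nabla^s_X\varphi)Y,
$$
which vanishes by symmetry once more. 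Hence $[\varphi,\varphi]=0$, so $[\varphi,\varphi]+2d\eta\otimes\xi=0$ and the structure is normal. Together with Step 1, $M$ is co-K\"ahler.

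The main obstacle is the Nijenhuis computation in Step 3. The difficulty is choosing the right expression of $[\varphi,\varphi]$ in terms of a torsion-free connection, and getting its signs right. I would first derive this formula from $[\varphi,\varphi](X,Y)=\varphi^2[X,Y]+[\varphi X,\varphi Y]-\varphi[\varphi X,Y]-\varphi[X,\varphi Y]$ by replacing each bracket with $\nabla^s_UV-\nabla^s_VU$. As an alternative, one could try to show $\widehat\nabla\varphi=0$ and invoke Remark \ref{uw::cok::char}, but the direct route above avoids comparing $\nabla^s$ with the Levi-Civita connection.
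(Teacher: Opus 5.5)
Your proof is correct, but it takes a different route from the paper. The paper never computes the Nijenhuis tensor. It uses Lemma~\ref{lm::dual}(2) to see that the dual connection $\nabla^d$ is also compatible. It then writes the Levi-Civita connection as $\widehat{\nabla}=\frac12(\nabla^s+\nabla^d)$, which holds because $\nabla^s$ is statistical, and reads off $\widehat{\nabla}\Phi=0$ and $\widehat{\nabla}\eta=0$. By Remark~\ref{uw::LC::compat} this gives $\widehat{\nabla}\varphi=0$, and the paper concludes with the characterization in Remark~\ref{uw::cok::char}.

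You instead verify the definition directly with the non-metric connection $\nabla^s$. Torsion-freeness together with $\nabla^s\eta=0$ and $\nabla^s\Phi=0$ gives $d\eta=d\Phi=0$. You then kill $[\varphi,\varphi]$ by combining two facts: the symmetry of $\nabla^s\varphi$ from Lemma~\ref{lm::dual}(3), and the anticommutation $(\nabla^s_X\varphi)\varphi=-\varphi(\nabla^s_X\varphi)$, which follows from $\nabla^s\xi=0$ and $\nabla^s\eta=0$. Your expression of $[\varphi,\varphi]$ through a torsion-free connection and the resulting cancellation are both right.

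What each route buys: yours does not rely on the ``well-known'' Remark~\ref{uw::cok::char}. It is essentially the computation behind the easy direction of that remark, carried out for a connection with $\nabla^s\varphi\neq 0$, where symmetry plus anticommutation replace parallelism. It also avoids the Levi-Civita connection, though not the dual connection, since part (3) of Lemma~\ref{lm::dual} comes from the dual-connection formula. The paper's argument is shorter and lands directly on the parallelism $\widehat{\nabla}\varphi=0$, which is the form in which co-K\"ahlerness is usually used.

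One cosmetic point: with the paper's normalization $d\alpha(U,V)=U\alpha(V)-V\alpha(U)-\alpha([U,V])$ (see the proof of Lemma~\ref{lm::dalpha}), the factor $2$ in your formula for $d\eta$ should not be there. This does not affect the vanishing argument.
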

\begin{proof}
Let $\nabla^d$ be the dual connection of $\nabla^s$ with respect to $g$.
Since $(\nabla^s,\varphi,\xi,\eta,g,\eps)$ is an almost contact pseudo-metric
statistical structure, $\nabla^s$ is statistical with respect to $g$ and
compatible with $(\varphi,\xi,\eta,g,\eps)$. Hence, by
Lemma \ref{lm::dual}, $\nabla^d$ is also compatible with
$(\varphi,\xi,\eta,g,\eps)$.
Since $\nabla^s$ is statistical, the Levi-Civita connection $\widehat{\nabla}$
of $g$ is given by
$$
\widehat{\nabla}=\frac12(\nabla^s+\nabla^d).
$$
Therefore $\widehat{\nabla}\Phi=0$ and $\widehat{\nabla}\eta=0$.
By Remark \ref{uw::LC::compat}, the equality $\widehat{\nabla}\Phi=0$ implies $\widehat{\nabla}\varphi=0$.
Now Remark \ref{uw::cok::char} implies that  $(M,\varphi,\xi,\eta,g,\eps)$ is a co-K\"ahler pseudo-metric manifold.
\end{proof}

\begin{ex}\label{ex::acstat}
Let $(M,\varphi,\xi,\eta,g,\eps)$ be a co-K\"ahler pseudo-metric manifold and
let $F\in\ker\eta$. Denote $\alpha:=g(F,\cdot)$, $\beta:=g(\varphi F,\cdot)$.
Let $\widehat{\nabla}$ be the Levi-Civita connection of $g$ and define
$$
\nabla^s=\widehat{\nabla}+D^s,
$$
where
\begin{equation}\label{eq::Ds::ac}
D^s_XY=
\bigl(\alpha(X)\beta(Y)+\beta(X)\alpha(Y)\bigr)F
+\bigl(\alpha(X)\alpha(Y)-\beta(X)\beta(Y)\bigr)\varphi F
\end{equation}
for all $X,Y\in\XM$.
\par Then $(\nabla^s,\varphi,\xi,\eta,g,\eps)$ is an almost contact pseudo-metric
statistical structure. Moreover, its dual connection is $\nabla^d=\widehat{\nabla}-D^s$.
\end{ex}

\begin{ex}\label{ex::independent}
The conditions of being statistical and compatible with the almost contact
pseudo-metric structure are independent.
\par Let $(M,\varphi,\xi,\eta,g,\eps)$ be a co-K\"ahler pseudo-metric manifold and
let $F\in\ker\eta$ be a nonzero vector field. Denote
$\alpha:=g(F,\cdot)$, $\beta:=g(\varphi F,\cdot)$,
and let $\widehat{\nabla}$ be the Levi-Civita connection of $g$.
For each of the following choices of $D^s$, put $\nabla^s=\widehat{\nabla}+D^s$.
\begin{enumerate}
\item
If $D^s_XY=\alpha(X)\alpha(Y)\,\varphi F$, then $\nabla^s$ is compatible with $(\varphi,\xi,\eta,g,\eps)$, but it is not statistical.
\item
If $D^s_XY=\alpha(X)\alpha(Y)F$, then $\nabla^s$ is statistical, but it is not compatible with $(\varphi,\xi,\eta,g,\eps)$.
\item
If $D^s_XY=\bigl(\alpha(X)\beta(Y)+\beta(X)\alpha(Y)\bigr)F+\alpha(X)\alpha(Y)\,\varphi F$,
then $\nabla^s$ is again statistical, but it is not compatible with $(\varphi,\xi,\eta,g,\eps)$.
\end{enumerate}
\end{ex}

\begin{ex}\label{ex::eta::needed}
The condition $\nabla^s\eta=0$ does not follow from $\nabla^s\Phi=0$, even if $\nabla^s$ is statistical.
\par Let $(M,\varphi,\xi,\eta,g,\eps)$ be a co-K\"ahler pseudo-metric manifold and
let $\widehat{\nabla}$ be the Levi-Civita connection of $g$. Define
$$
\nabla^s=\widehat{\nabla}+D,\qquad D_XY:=\eta(X)\eta(Y)\,\xi
$$
for all $X,Y\in\XM$.
\par Then $\nabla^s$ is torsion-free, since $D$ is symmetric. Moreover,
$$
g(D_XY,Z)=\eps\,\eta(X)\eta(Y)\eta(Z),
$$
which is totally symmetric in $X,Y,Z$. Hence $\nabla^s$ is statistical with
respect to $g$.
\par Since $(M,\varphi,\xi,\eta,g,\eps)$ is co-K\"ahler, we have
$\widehat{\nabla}\Phi=0$ ,$\widehat{\nabla}\eta=0$.
Furthermore, $D_XY$ is always proportional to $\xi$, and therefore
$$
\Phi(D_XY,Z)=\Phi(Y,D_XZ)=0
$$
for all $X,Y,Z\in\XM$. Therefore, $(\nabla^s_X\Phi)(Y,Z)=0$, that is,
$$
\nabla^s\Phi=0.
$$
On the other hand,
$$
(\nabla^s_X\eta)(Y)=(\widehat{\nabla}_X\eta)(Y)-\eta(D_XY)=-\eta(X)\eta(Y),
$$
so $\nabla^s\eta\neq 0$.
\end{ex}

\begin{lm}\label{lm::dalpha}
Let $(M,\varphi,\xi,\eta,g,\eps)$ be an almost contact pseudo-metric manifold
and let $\nabla^s$ be a statistical connection with dual connection $\nabla^d$
with respect to $g$. Let $F\in\XM$ and define $\alpha:=g(F,\cdot)$, $\beta:=g(\varphi F,\cdot)$.
Then
\begin{equation}\label{eq::dalpha}
d\alpha(U,V)=g(\nabla^s_UF,V)-g(U,\nabla^s_VF)=g(\nabla^d_UF,V)-g(U,\nabla^d_VF)
\end{equation}
for all $U,V\in\XM$.
Moreover,
\begin{equation}\label{eq::dbeta}
d\beta(U,V)=g(\nabla^s_U\varphi F,V)-g(U,\nabla^s_V\varphi F)=g(\nabla^d_U\varphi F,V)-g(U,\nabla^d_V\varphi F)
\end{equation}
for all $U,V\in\XM$.
If, in addition, $\nabla^s$ is compatible with $(\varphi,\xi,\eta,g,\eps)$, then
\begin{equation}\label{eq::dbeta2}
d\beta(U,V)=g(\varphi\nabla^d_UF,V)-g(U,\varphi\nabla^d_VF)=g(\varphi\nabla^s_UF,V)-g(U,\varphi\nabla^s_VF).
\end{equation}
\end{lm}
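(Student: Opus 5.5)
The plan is to work directly from the exterior derivative formula, using the convention already in use in the proof of Theorem \ref{lm::acokequiv}:
$$
d\alpha(U,V)=U\alpha(V)-V\alpha(U)-\alpha([U,V]).
$$
This convention is the one consistent with the formula $d\alpha(X,F)=\frac12 X(g(F,F))-g(\nabla_FF,X)$ obtained there. The tools are two standard facts about a statistical connection $\nabla^s$ with dual $\nabla^d$. First, the duality relation
$$
Ug(V,W)=g(\nabla^s_UV,W)+g(V,\nabla^d_UW).
$$
Second, both $\nabla^s$ and $\nabla^d$ are torsion-free. For $\nabla^d$ this follows from Codazzi symmetry of $\nabla^s g$, since $\nabla^d=2\widehat{\nabla}-\nabla^s$.

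\textbf{Step 1: formula \eqref{eq::dalpha}.} I write $U\alpha(V)=Ug(V,F)=g(\nabla^s_UV,F)+g(V,\nabla^d_UF)$, and similarly for $V\alpha(U)$. Subtracting gives
$$
d\alpha(U,V)=g(\nabla^d_UF,V)-g(U,\nabla^d_VF)+g\bigl(\nabla^s_UV-\nabla^s_VU-[U,V],F\bigr).
$$
The last term vanishes because $\nabla^s$ is torsion-free, which yields the $\nabla^d$-version of \eqref{eq::dalpha}. Because $\nabla^d$ is also torsion-free and its dual is $\nabla^s$, I repeat the same computation with the roles of $\nabla^s$ and $\nabla^d$ exchanged. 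This gives the $\nabla^s$-version.

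\textbf{Step 2: formula \eqref{eq::dbeta}.} This is identical to Step 1 with $F$ replaced by $\varphi F$, since $\beta=g(\varphi F,\cdot)$. Nothing about $\varphi$ is used here.

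\textbf{Step 3: formula \eqref{eq::dbeta2}.} Assuming compatibility, I apply \eqref{eq::dual::phi} from Lemma \ref{lm::dual}:
$$
\nabla^s_U\varphi F=\varphi\nabla^d_UF,\qquad \nabla^d_U\varphi F=\varphi\nabla^s_UF.
$$
Substituting the first identity into the $\nabla^s$-expression of \eqref{eq::dbeta} gives the first equality in \eqref{eq::dbeta2}. Substituting the second into the $\nabla^d$-expression gives the second equality.

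There is no real obstacle in this argument. The only points needing care are fixing the convention for $d$ without a factor $\frac12$, so that it agrees with the earlier computations in the paper, and justifying torsion-freeness of $\nabla^d$ so that the roles of $\nabla^s$ and $\nabla^d$ can be swapped.
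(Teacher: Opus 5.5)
Your proposal is correct and follows essentially the same argument as the paper. You expand $d\alpha(U,V)=U\alpha(V)-V\alpha(U)-\alpha([U,V])$ via the duality relation, cancel the bracket term by torsion-freeness of one connection, swap roles to get the other expression, and then substitute $\nabla^s_U\varphi F=\varphi\nabla^d_UF$ and $\nabla^d_U\varphi F=\varphi\nabla^s_UF$ from Lemma \ref{lm::dual}. The only cosmetic difference is the order: the paper first obtains the $\nabla^s$-version using torsion-freeness of $\nabla^d$, whereas you first obtain the $\nabla^d$-version using torsion-freeness of $\nabla^s$.
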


\begin{proof}
Since $\nabla^d$ has no torsion,
\begin{align*}
d\alpha(U,V)
&=U\alpha(V)-V\alpha(U)-\alpha([U,V])\\
&=g(\nabla^s_UF,V)+g(F,\nabla^d_UV)
-g(\nabla^s_VF,U)-g(F,\nabla^d_VU)-g(F,[U,V])\\
&=g(\nabla^s_UF,V)-g(U,\nabla^s_VF),
\end{align*}
which proves the first equality in \eqref{eq::dalpha}. Analogously, using the
fact that $\nabla^s$ is torsion-free, we obtain the second equality in \eqref{eq::dalpha}.
Applying the same argument to the vector field $\varphi F$ we obtain \eqref{eq::dbeta}.
\par Assume now that $\nabla^s$ is compatible with
$(\varphi,\xi,\eta,g,\eps)$. Then, by Lemma \ref{lm::dual},
$\nabla^s_U\varphi F=\varphi\nabla^d_UF$ and $\nabla^d_U\varphi F=\varphi\nabla^s_UF$.
Substituting these equalities into \eqref{eq::dbeta}, we obtain \eqref{eq::dbeta2}.
\end{proof}
\begin{df}
We say that a connection $\nabla$ on the product of manifolds $M_1\times M_2$ is a weak product connection if
\[
\nabla_XY=0
\]
for $X,Y$ being pull-back vector fields from different product factors  and if additionally for $X,\widehat X\in\Gamma T_i$ 
\[
\nabla_X\widehat X\in\Gamma T_i
\]
for any $i\in\{1,2\}$, where $TM=T_1\oplus T_2$ is the induced splitting of the tangent bundle.
\end{df}

\begin{tw}\label{tw::acstat}
Let $(M,\varphi,\xi,\eta,g,\eps)$ be an almost contact pseudo-metric manifold
and let $(\nabla^s,\varphi,\xi,\eta,g,\eps)$ be an almost contact pseudo-metric
statistical structure. Let $F\in\ker\eta$ be a vector field such that
$g(F_p,F_p)\neq 0$, $g(F_p,F_p)\neq -1$ for all $p\in M$.
Then the following conditions are equivalent:
\begin{enumerate}
\item $\nabla^s$ is compatible with $(\varphi,\xi,\eta,g^\ast,\eps)$.
\item $\nabla^s$ is statistical with respect to $g^\ast$.
\item $(M,\varphi,\xi,\eta,g,\eps)$ is locally isometric and $\varphi$-holomorphic to
$$
\bigl(N\times \R^2,\widetilde\varphi,\widetilde\xi,\widetilde\eta,g|_N+\psi(t,s)(dt^2+ds^2),\eps\bigr),
$$
where $N$ is a leaf of the distribution $H^{\perp_g}:=\Span\{F,\varphi F\}^{\perp_g}$,
$\psi$ is a nowhere vanishing function, and $(\widetilde\varphi,\widetilde\xi,\widetilde\eta)$ is the
almost contact structure on $N\times\R^2$ given by
\begin{align*}
\widetilde\varphi(X+a\partial_t+b\partial_s)=\varphi_NX+a\partial_s-b\partial_t, \\
\widetilde\xi=\xi_N,\quad \widetilde\eta(X+a\partial_t+b\partial_s)=\eta_N(X)
\end{align*}
for every $X\in\X(N)$ and $a,b\in\Cniesk(N\times\R^2)$.
\end{enumerate}
Moreover, in this case $\nabla^s$ is a weak product connection and $g(F,F)$ is constant.
\end{tw}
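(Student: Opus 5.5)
The plan is to reduce everything to the two $1$-forms $\alpha:=g(F,\cdot)$ and $\beta:=g(\varphi F,\cdot)$. We have $g^\ast=g+\alpha\otimes\alpha+\beta\otimes\beta$ and $\Phi^\ast=\Phi-\alpha\wedge\beta$, and $\nabla^s\Phi=0$, $\nabla^s\eta=0$, with $\nabla^s g$ totally symmetric. Hence:

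\begin{itemize}
\item condition (1) is equivalent to $\nabla^s(\alpha\wedge\beta)=0$, since $\nabla^s\eta=0$ is unchanged;
\item condition (2) is equivalent to total symmetry of $\nabla^s(\alpha\otimes\alpha+\beta\otimes\beta)$.
\end{itemize}

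Using the duality of $\nabla^s,\nabla^d$ and Lemma \ref{lm::dual}, I will compute
$$(\nabla^s_X\alpha)(Y)=g(\nabla^d_XF,Y),\qquad (\nabla^s_X\beta)(Y)=g(\nabla^d_X\varphi F,Y)=g(\varphi\nabla^s_XF,Y).$$
So both conditions become linear algebraic conditions on the endomorphisms $A:=\nabla^dF$ and $B:=\varphi\nabla^sF$.

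\textbf{Step 1: (1) or (2) implies a block form.} Write $\lambda=g(F,F)\neq0$ and $TM=H\oplus H^{\perp_g}$ with $H=\Span\{F,\varphi F\}$. I will evaluate $(\nabla^s_X(\alpha\wedge\beta))(Y,Z)=0$ with $Y,Z$ ranging over $\{F,\varphi F\}\cup H^{\perp_g}$.

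\begin{itemize}
\item Taking $Y=F$, $Z\in H^{\perp_g}$ should give $g(A_X,Z)=0$ and $g(B_X,Z)=0$, that is, $\nabla^d_XF$ and $\varphi\nabla^s_XF$ lie in $H$ for every $X$.
\item Taking $Y=F$, $Z=\varphi F$ should give $g(\nabla^d_XF,F)+g(\varphi\nabla^s_XF,\varphi F)=0$.
\end{itemize}

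I expect the same block conclusion from (2) by polarising the symmetry condition, using the same substitutions and symmetrising in the three slots. Then $\nabla^s_XF$ and $\nabla^d_XF$ both lie in $H$, so $\nabla^s$ and $\nabla^d$ preserve $H$. By $g$-duality they also preserve $H^{\perp_g}$, and then so does $\widehat\nabla=\tfrac12(\nabla^s+\nabla^d)$.

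Next, $X\lambda=g(\nabla^s_XF,F)+g(F,\nabla^d_XF)$. Combining this with the $H$-components just obtained, and with the relation between $\nabla^s$ and $\nabla^d$ on $\varphi$ from Lemma \ref{lm::dual}, should force $X\lambda=0$. This yields the final claim that $g(F,F)$ is constant. Comparing the equalities coming from (1) with those coming from (2) should also give (1)$\Leftrightarrow$(2) directly, as both reduce to the same block form together with $d\lambda=0$.

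\textbf{Step 2: block form implies (3) and the weak product claim.} By Lemma \ref{lm::acstat}, $g$ is co-K\"ahler. Both $H$ and $H^{\perp_g}$ are $\widehat\nabla$-parallel and nondegenerate, so the local de Rham decomposition applies and $M$ is locally isometric to $N\times\Sigma$, with $N$ a leaf of $H^{\perp_g}$ and $\Sigma$ a leaf of $H$. Both distributions are $\varphi$-invariant. Arguing as in the proof of Theorem \ref{tw::localprod}, via Lemma \ref{lm::productsplit::acx}, the $\Sigma$ factor is a $2$-dimensional pseudo-K\"ahler surface. In isothermal coordinates $(t,s)$ adapted to $J$ its metric is $\psi(t,s)(dt^2+ds^2)$, and the local map is a $\varphi$-holomorphism. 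Since $\nabla^s$ preserves both distributions and is torsion-free, mixed derivatives of pulled-back fields lie in both factors, hence vanish; this gives the weak product property.

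\textbf{Step 3: (3) implies (1) and (2).} This is the step I expect to be the main obstacle, because (3) is a condition only on the metric and almost contact structure. It says nothing a priori about $\nabla^s$ or about how $F$ sits in the $\R^2$ factor. I would first try to show that on such a product $\nabla^s=\widehat\nabla+K$, with $K$ totally $g$-symmetric and $\varphi$-anticommuting, automatically respects the splitting. If that fails, I would read (3) as including the identification $H=T\R^2$ and the weak product property of $\nabla^s$, and then check that $\nabla^s_XF$ and $\nabla^d_XF$ lie in $H$ with $\lambda$ constant. Reversing the computations of Step 1 then gives (1) and (2).
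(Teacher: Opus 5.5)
\textbf{Main gap: (2) does not yield the block form by polarisation alone.} Your treatment of (1) is correct, and it is more direct than the paper's chain (1)$\Rightarrow$(2)$\Rightarrow$(3). Take $Z\in H^{\perp_g}$. The components $(Y,Z)=(\varphi F,Z)$ and $(F,Z)$ of $\nabla^s_X(\alpha\wedge\beta)=0$ give $g(\nabla^d_XF,Z)=0$ and $g(\varphi\nabla^s_XF,Z)=0$ respectively, for \emph{every} $X$. Note that it is $Y=\varphi F$, not $Y=F$, that controls $A$. The $(F,\varphi F)$ component is exactly $\lambda\,X\lambda=0$. Step 2 matches the paper's de Rham argument, and your derivation of the weak product property from torsion-freeness is cleaner than the paper's.

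The problem is your claim that (2) gives the same block form ``by polarising the symmetry condition''. Put $\mathcal D:=H^{\perp_g}$, $a(X,Y):=g(\nabla^d_XF,Y)$, $b(X,Y):=g(\nabla^d_X\varphi F,Y)$ and $\Delta(U,V,W):=(\nabla^s_Ug^\ast)(V,W)-(\nabla^s_Vg^\ast)(U,W)$. Then $\Delta=0$ does kill $a,b$ on $\mathcal D\times\mathcal D$. However, the components with one slot in $\mathcal D$ and two in $H$ give, by themselves, only four independent relations among eight unknown $\mathcal D$-covectors ($a(F,\cdot)$, $a(\varphi F,\cdot)$, $b(F,\cdot)$, $b(\varphi F,\cdot)$, $a(\cdot,F)$, \dots). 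Unlike under (1), no single component isolates $Z\lambda$, so polarising cannot finish the job.

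You have to bring in structure you only invoke in Step 2:
\begin{enumerate}
\item $\widehat\nabla\varphi=0$, from Lemma \ref{lm::acstat};
\item the tensor $K_XY:=\nabla^s_XY-\widehat\nabla_XY=\widehat\nabla_XY-\nabla^d_XY$, for which $K(X,Y,Z):=g(K_XY,Z)$ is totally symmetric, $K_X\varphi=-\varphi K_X$ and $K_X\xi=0$ (Lemma \ref{lm::dual}).
\end{enumerate}
The argument then runs as follows.
\begin{enumerate}
\item $\Delta(F,\varphi F,Z)=\lambda\,g([F,\varphi F],Z)$ gives $[F,\varphi F]\in H$. Using $\widehat\nabla\varphi=0$, this forces $\widehat\nabla_FF+\widehat\nabla_{\varphi F}\varphi F\in H$.
\item Add $\Delta(Z,F,F)=\lambda\bigl(Z\lambda-g(\widehat\nabla_FF,Z)-K(Z,F,F)\bigr)$ and $\Delta(Z,\varphi F,\varphi F)=\lambda\bigl(Z\lambda-g(\widehat\nabla_{\varphi F}\varphi F,Z)+K(Z,F,F)\bigr)$. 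By the previous step this yields $Z\lambda=0$.
\item $\Delta(Z,F,\varphi F)=\lambda\bigl(g(\widehat\nabla_FF,\varphi Z)-K(Z,F,\varphi F)\bigr)$, evaluated at $\varphi Z$, gives $K(Z,F,F)=g(\widehat\nabla_FF,Z)$. Together with $\Delta(Z,F,F)=0$ this forces both quantities to vanish.
\end{enumerate}
Only then do $\nabla^s,\nabla^d$ map $H\times H$ into $H$. This is exactly what the paper compresses into ``a direct computation together with the symmetry of $\nabla^s\varphi$'', and your proposal does not supply it.

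For (1)$\Rightarrow$(2) you can avoid the reverse computation. The formula for $\nabla^d$ in Lemma \ref{lm::dual} does not involve the metric. So the $g^\ast$-dual of $\nabla^s$ equals its $g$-dual, which is torsion-free; this is the paper's one-line argument.

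\textbf{Step 3.} Your suspicion about (3)$\Rightarrow$(1) is justified, but your first plan cannot succeed. In the fallback, the constancy of $g(F,F)$ must be \emph{assumed}, not checked.
\begin{enumerate}
\item An almost contact statistical connection on $N\times\R^2$ need not respect the splitting. Take the connection of Example \ref{ex::acstat} built from $F'=\partial_t+e$, with $e\in TN\cap\ker\eta$ a unit parallel field on a flat $N$. It has $\nabla^s_ee=\varphi F'\notin TN$.
\item Even a product connection does not suffice. Take $\nabla^s=\widehat\nabla$, $\psi\equiv1$ and $F=f\,\partial_t$ with $f>0$ nonconstant. Then (3) holds with $H=T\R^2$, yet $\widehat\nabla(\alpha\wedge\beta)=d(f^2)\otimes dt\wedge ds\neq0$. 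So (1) fails, and so does (2).
\end{enumerate}
Thus (3) has to be read as also containing $H=T\R^2$, invariance of both factors under $\nabla^s$, and $d\lambda=0$. This is exactly what the paper's proof of (3)$\Rightarrow$(1) tacitly uses (``\dots and that $\nabla^s$ is a weak product connection. Since $g(F,F)=\lambda$ is constant\dots''). Under that reading your reversal works. The cleanest route is via $\alpha\wedge\beta=-\lambda\,\Phi(\pi_H\cdot,\pi_H\cdot)$ and $\alpha\otimes\alpha+\beta\otimes\beta=\lambda\,g(\pi_H\cdot,\pi_H\cdot)$, with $\pi_H$ $\nabla^s$-parallel.
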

\begin{proof}
By Lemma \ref{lm::acstat}, the existence of the almost contact pseudo-metric
statistical structure $(\nabla^s,\varphi,\xi,\eta,g,\eps)$ implies that
$(M,\varphi,\xi,\eta,g,\eps)$ is a co-K\"ahler pseudo-metric manifold.

\medskip

\noindent\emph{$(1)\Rightarrow(2)$.}
Let $\nabla^d$ denote the dual connection of $\nabla^s$ with respect to $g$.
Since $\nabla^s$ is statistical with respect to $g$, the connection $\nabla^d$
is torsion-free. Let $\nabla^{d,\ast}$ denote the dual connection of $\nabla^s$
with respect to $g^\ast$. Since $\nabla^s$ is compatible with
$(\varphi,\xi,\eta,g^\ast,\eps)$, Lemma \ref{lm::dual} gives
$$
\nabla^{d,\ast}_XY=-\varphi\nabla^s_X\varphi Y+X(\eta(Y))\,\xi
$$
for all $X,Y\in\XM$. On the other hand, because $\nabla^s$ is compatible with
$(\varphi,\xi,\eta,g,\eps)$, the same lemma gives
$$
\nabla^d_XY=-\varphi\nabla^s_X\varphi Y+X(\eta(Y))\,\xi.
$$
Hence $\nabla^{d,\ast}=\nabla^d$.
Since $\nabla^d$ has no torsion, $\nabla^{d,\ast}$ has no torsion as well.
Therefore $\nabla^s$ is statistical with respect to $g^\ast$.

\medskip

\noindent\emph{$(2)\Rightarrow(3)$.}
Let us denote
$\alpha:=g(F,\cdot)$, $\beta:=g(\varphi F,\cdot)$, $\mathcal D:=H^{\perp_g}=\Span\{F,\varphi F\}^{\perp_g}$.
Since $F\in\ker\eta$ and $g(\varphi F,\varphi F)=g(F,F)$, $g(F,\varphi F)=0$,
the distribution $\mathcal D$ is well defined and
$$
TM=\mathcal D\oplus \Span\{F,\varphi F\}.
$$
Define
\begin{equation}\label{eq::Delta}
\Delta(U,V,W):=(\nabla^s_Ug^\ast)(V,W)-(\nabla^s_Vg^\ast)(U,W)
\end{equation}
for all $U,V,W\in\XM$.
Using $g^\ast=g+\alpha\otimes\alpha+\beta\otimes\beta$
and the fact that $\nabla^sg$ is totally symmetric, we obtain
\begin{align*}
\Delta(U,V,W)
&=(\nabla^s_U(\alpha\otimes\alpha))(V,W)-(\nabla^s_V(\alpha\otimes\alpha))(U,W)\\
&\quad+
(\nabla^s_U(\beta\otimes\beta))(V,W)-(\nabla^s_V(\beta\otimes\beta))(U,W).
\end{align*}
Expanding and using Lemma \ref{lm::dalpha}, we get
\begin{align}
\Delta(U,V,W)
=d\alpha(U,V)\alpha(W)+d\beta(U,V)\beta(W) +
g\bigl(\alpha(V)\nabla^d_UF-\alpha(U)\nabla^d_VF,W\bigr)\notag\\
+g\bigl(\beta(V)\nabla^d_U\varphi F-\beta(U)\nabla^d_V\varphi F,W\bigr).\notag
\end{align}
Evaluating above on the decomposition
$$
TM=\mathcal D\oplus \Span\{F,\varphi F\}
$$
for $X,Y,Z\in\Gamma(\mathcal D)$
we obtain
\begin{align}
\Delta(X,Y,F)
&=g(F,F)\,d\alpha(X,Y), \label{eq::DeltaXYF}\\
\Delta(X,Y,\varphi F)
&=g(F,F)\,d\beta(X,Y), \label{eq::DeltaXYphiF}\\
\Delta(X,F,Z)
&=g(F,F)\,g(\nabla^d_XF,Z), \label{eq::DeltaXFZ}\\
\Delta(X,\varphi F,Z)
&=g(F,F)\,g(\nabla^d_X\varphi F,Z), \label{eq::DeltaXphiFZ}\\
\Delta(F,\varphi F,Z)
&=g(F,F)\,g([F,\varphi F],Z), \label{eq::DeltaFphiFZ}\\
\Delta(F,\varphi F,\varphi F)
&=g(F,F)\,F\bigl(g(F,F)\bigr), \label{eq::DeltaFphiFphiF}\\
\Delta(F,\varphi F,F)
&=-g(F,F)\,\varphi F\bigl(g(F,F)\bigr), \label{eq::DeltaFphiFF}\\
\Delta(X,\varphi F,\varphi F)
&=g(F,F)\Bigl(
X\bigl(g(F,F)\bigr)-g(X,\nabla^s_{\varphi F}\varphi F)
\Bigr). \label{eq::DeltaXphiFphiF}
\end{align}
Since $\Delta=0$ ($\nabla^s$ is statistical with respect to $g^\ast$) and $g(F,F)\neq 0$,
it follows from \eqref{eq::DeltaXYF} and \eqref{eq::DeltaXYphiF} that
$$
d\alpha(X,Y)=0,\qquad d\beta(X,Y)=0
$$
for all $X,Y\in\Gamma(\mathcal D)$.
Hence, by the Fr\"obenius theorem, the distribution
$$
\mathcal D=\ker\alpha\cap\ker\beta
$$
is integrable. The formula \eqref{eq::DeltaFphiFZ} implies that
distribution $\Span\{F,\varphi F\}$ is also integrable.
\par Moreover, \eqref{eq::DeltaXFZ} and \eqref{eq::DeltaXphiFZ} imply
$$
\nabla^d_XF\in\Gamma(\Span\{F,\varphi F\}),\qquad \nabla^d_X\varphi F\in\Gamma(\Span\{F,\varphi F\})
$$
for every $X\in\Gamma(\mathcal D)$.
Now (thanks to Lemma \ref{lm::dual}) we obtain
$$
\nabla^s_XF,\ \nabla^s_X\varphi F,\ \nabla^d_XF,\ \nabla^d_X\varphi F
\in \Gamma(\Span\{F,\varphi F\})
$$
for every $X\in\Gamma(\mathcal D)$.
\par Let $X,Y\in\Gamma(\mathcal D)$ and $U\in\Gamma(\Span\{F,\varphi F\})$.
Since $g(Y,U)=0$, we have
$$
0=Xg(Y,U)=g(\nabla^s_XY,U)+g(Y,\nabla^d_XU).
$$
The second term vanishes because $\nabla^d_XU\in\Gamma(\Span\{F,\varphi F\})$.
Hence $g(\nabla^s_XY,U)=0$ for every $U\in\Gamma(\Span\{F,\varphi F\})$, and therefore
$$
\nabla^s_XY\in\Gamma(\mathcal D), \ \ \ \nabla^d_XY\in\Gamma(\mathcal D).
$$

A direct computation together with the symmetry of
$\nabla^s\varphi$, shows that
$
g(X,\nabla^d_UV)=0
$
for every $X\in\Gamma(\mathcal D)$ and all
$U,V\in\Gamma(\Span\{F,\varphi F\})$.
Hence
$$
\nabla^d_UV\in\Gamma(\Span\{F,\varphi F\}).
$$
Using again Lemma \ref{lm::dual}, we obtain the same conclusion for $\nabla^s$.
Therefore the distributions
$\mathcal D$ and $\Span\{F,\varphi F\}$
are both $\nabla^s$- and $\nabla^d$-parallel.
\par Since $\widehat{\nabla}=\frac{1}{2}(\nabla^s+\nabla^d)$,
both distributions are also $\widehat{\nabla}$-parallel. As they are
complementary, orthogonal and nondegenerate, the local de Rham decomposition
theorem, see \cite{M}, yields a local isometric decomposition
$$
(M,g)\cong (N\times S,g_1+g_2),
$$
where $TN=\mathcal D$ and $TS=\Span\{F,\varphi F\}$.
Because $\mathcal D$ and $TS$ are $\varphi$-invariant, the almost contact
structure decomposes as
$$
(\varphi,\xi,\eta)\cong (\varphi_N,\xi_N,\eta_N)\oplus J_2,
$$
where $J_2$ is the induced complex structure on the two-dimensional manifold $S$.
We may choose local coordinates $(t,s)$ on $S$ such that $(S,J_2)\cong (\R^2,J_0)$, where
$J_0(\partial_t)=\partial_s$, $J_0(\partial_s)=-\partial_t$.
Thus $(M,\varphi,\xi,\eta,g,\eps)$ is locally isometric and $\varphi$-holomorphic
to
$$
\bigl(N\times \R^2,\widetilde\varphi,\widetilde\xi,\widetilde\eta,g_1+g_2,\eps\bigr),
$$
with
$$
\widetilde\varphi(X+a\partial_t+b\partial_s)=\varphi_NX+a\partial_s-b\partial_t,
$$
$$
\widetilde\xi=\xi_N,\qquad \widetilde\eta(X+a\partial_t+b\partial_s)=\eta_N(X).
$$
Since $g_2$ is compatible with $J_0$, there exists a nowhere vanishing function
$\psi$ such that $g_2=\psi(t,s)(dt^2+ds^2)$.

\par

 Furthermore,  the connection $\nabla^s$ is a weak product
connection: if $X,Y$ are pull-back vector fields  from different
factors, then
\[
\widehat{\nabla}_XY=0
\]
since we already know that the manifold is a product.
Next, using the formulas from Lemma \ref{lm::dual}, we obtain
\[
2\nabla^s_XY=\varphi\bigl((\nabla^s_X\varphi)(Y)\bigr).
\]

In particular, using that the two distributions are $\nabla^s$-parallel, $(\nabla^s_X\varphi)(Y)$ is a section of $H$ if $Y$ is a section of
$H$ and $X$ is a section of $H^\perp$, and it is a section of $H^\perp$
in the opposite case.
Now, using the symmetry of $\nabla\varphi$ and once again the fact that both
distributions are parallel, we obtain that the right-hand side belongs
simultaneously to $H$ and $H^\perp$. Hence it vanishes.

\par It remains to prove that $g(F,F)$ is constant. Since $\Delta=0$ and
$g(F,F)\neq 0$, from \eqref{eq::DeltaFphiFphiF} and \eqref{eq::DeltaFphiFF}
we immediately obtain
$$
F\bigl(g(F,F)\bigr)=0,\qquad\varphi F\bigl(g(F,F)\bigr)=0.
$$
From \eqref{eq::DeltaXphiFphiF}, we also get
$$
X\bigl(g(F,F)\bigr)-g(X,\nabla^s_{\varphi F}\varphi F)=0
$$
for $X\in\Gamma(\mathcal D)$.
Since $\Span\{F,\varphi F\}$ is $\nabla^s$-parallel, we have
$\nabla^s_{\varphi F}\varphi F\in\Gamma(\Span\{F,\varphi F\})$
thus $g(X,\nabla^s_{\varphi F}\varphi F)=0$.
Therefore $X\bigl(g(F,F)\bigr)=0$ and in consequence $g(F,F)$ is constant.

\medskip

\noindent\emph{$(3)\Rightarrow(1)$.}
Suppose now that $(M,\varphi,\xi,\eta,g,\eps)$ is locally isometric and
$\varphi$-holomorphic to
$$
\bigl(N\times \R^2,\widetilde\varphi,\widetilde\xi,\widetilde\eta,
g|_N+\psi(t,s)(dt^2+ds^2),\eps\bigr),
$$
and that $\nabla^s$ is a weak product connection. Since $g(F,F)=\lambda$ is
constant we have
$$
g^\ast=g|_N+(1+\lambda)\psi(t,s)(dt^2+ds^2)
$$
and
$$
\Phi^\ast=\Phi_N+(1+\lambda)\Omega_2,
$$
where $\Phi_N$ is the fundamental form of the first factor and $\Omega_2$ is
the standard fundamental form of the second one. Since $\nabla^s$ is a weak  product
connection, compatible with $(\varphi,\xi,\eta,g,\eps)$ and $1+\lambda$ is
constant, it follows immediately that $\nabla^s\Phi^\ast=0$. Of course $\nabla^s\eta=0$ thus
$\nabla^s$ is compatible with $(\varphi,\xi,\eta,g^\ast,\eps)$.
This completes the proof.
\end{proof}
\begin{uw}
    An analogous to the above theorem in holomorphic statistical setting have been obtained in \cite{MNO}, Theorem 4.8. However,  their  analogous condition in (3) claims that $\nabla^s$ is a product connection in the standard sense. This is not correct as taking $M=\mathbb{R}^4=\mathbb{C}\oplus\mathbb{C}$ with the induced complex structure, with the pseudo-K\"ahler metric $g=dx^2+dy^2-dz^2-dw^2$ and $F=\sqrt{2}\partial_z$, we have that the connection $\nabla^s$ given by the only non-zero Christoffel symbols $$ \Gamma^1_{12}=\Gamma^1_{21}=z,\ \Gamma^2_{11}=z,\ \Gamma^2_{22}=-z  $$ 
    is statistical with respect to both $g$ and its Hermitian variation but is not a product connection (but it is a weak product connection). By taking an analogous structure in our case (by taking trivial almost contact metric structure above $M$) we obtain that also in our case $\nabla^s$ may not be a product connection. 
\end{uw}
\section*{Acknowledgments}
The first author has been supported by the Polish National Science Center, project number 2022/47/D/ST1/02197. This research was financed by the Ministry of Science and Higher Education of the Republic of Poland.

\end{document}